\documentclass[12pt,reqno]{amsart}
\usepackage[pagebackref,colorlinks=true]{hyperref}
\usepackage{float}
\usepackage{changepage}
\usepackage{verbatim}
\usepackage{amsfonts,amsmath,amsthm,amssymb,amscd,enumerate,eucal,url}
\usepackage{mathtools}
\usepackage[margin=1in]{geometry}
\usepackage{xcolor}
\usepackage{tikz}
\usepackage{microtype}
\usetikzlibrary{arrows.meta,positioning}

\usepackage{mathrsfs}

\numberwithin{equation}{section}
\newlength{\extralength}
\newcommand{\R}{\mathbb{R}}

\newcommand{\Rp}{\R_{>0}}

\newcommand{\tr}{\operatorname{tr}}
\newcommand{\im}{\operatorname{im}}

\newtheorem{Theorem}{Theorem}[section]
\newtheorem{Lemma}{Lemma}[section]
\newtheorem{Proposition}{Proposition}[section]
\newtheorem{Corollary}{Corollary}[section]
\newtheorem{Definition}{Definition}[section]
\newtheorem{Remark}{Remark}[section]
\newtheorem{Example}{Example}[section]

\newtheoremstyle{axiomstyle}
  {}{}                 % razmaci
  {\itshape}           % telo
  {}                   % indent
  {\bfseries}          % naslov
  {.}                  % tačka iza broja
  {0.5em}              % razmak
  {\thmname{#1}~\thmnumber{#2}\thmnote{ (#3)}} % OVO PRIKAZUJE (RG0...)
  
\theoremstyle{axiomstyle}

\title{Golden and Metallic Structures on Hessian Manifolds}
\date{\today}

\author{Jonathan Washburn}
\address{Jonathan Washburn\\
Recognition Physics Institute, Austin, Texas, USA}
\email{jon@recognitionphysics.org}

\author{Milan Zlatanovi\'c}
\address{Milan Zlatanovi\'c\\
Department of Mathematics, Faculty of Science and Mathematics, University of Ni\v s, Vi\v segradska 33, 18000 Ni\v s, Serbia}
\email{zlatmilan@yahoo.com}

\begin{document}

\newcommand{\add}[1]{{\color{blue}#1}}
\newcommand{\del}[1]{{\color{red}#1}}
\newenvironment{added}{\begingroup\color{blue}}{\endgroup}
\newenvironment{deleted}{%
  \begingroup\color{red}%
  % Prevent deleted text from affecting citation order/numbering.
  \renewcommand{\cite}[2][]{\relax}%
  % Prevent deleted text from affecting labels/counters/structure.
  \renewcommand{\label}[1]{\relax}%
  \let\ref\relax
  \let\eqref\relax
  \let\pageref\relax
  \renewcommand{\section}[1]{\par\medskip\noindent{\bfseries [deleted section] ##1}\par}%
  \renewcommand{\subsection}[1]{\par\medskip\noindent{\bfseries [deleted subsection] ##1}\par}%
  \renewcommand{\subsubsection}[1]{\par\medskip\noindent{\bfseries [deleted subsubsection] ##1}\par}%
  \renewcommand{\paragraph}[1]{\par\noindent{\bfseries [deleted paragraph] ##1}\par}%
  \renewcommand{\subparagraph}[1]{\par\noindent{\bfseries [deleted subparagraph] ##1}\par}%
  \renewenvironment{equation}{\[\ignorespaces}{\]\ignorespacesafterend}%
  \renewenvironment{theorem}[1][]{\par\medskip\noindent\textbf{[deleted theorem] }\ignorespaces}{\par\medskip}%
  \renewenvironment{lemma}[1][]{\par\medskip\noindent\textbf{[deleted lemma] }\ignorespaces}{\par\medskip}%
  \renewenvironment{proposition}[1][]{\par\medskip\noindent\textbf{[deleted proposition] }\ignorespaces}{\par\medskip}%
  \renewenvironment{corollary}[1][]{\par\medskip\noindent\textbf{[deleted corollary] }\ignorespaces}{\par\medskip}%
  \renewenvironment{definition}[1][]{\par\medskip\noindent\textbf{[deleted definition] }\ignorespaces}{\par\medskip}%
  \renewenvironment{remark}[1][]{\par\medskip\noindent\textbf{[deleted remark] }\ignorespaces}{\par\medskip}%
  \renewenvironment{example}[1][]{\par\medskip\noindent\textbf{[deleted example] }\ignorespaces}{\par\medskip}%
  \renewenvironment{notation}[1][]{\par\medskip\noindent\textbf{[deleted notation] }\ignorespaces}{\par\medskip}%
  \renewenvironment{convention}[1][]{\par\medskip\noindent\textbf{[deleted convention] }\ignorespaces}{\par\medskip}%
  \renewenvironment{axiom}[1][]{\par\medskip\noindent\textbf{[deleted axiom] }\ignorespaces}{\par\medskip}%
}{\endgroup}

\newcommand{\Poly}{\R[u,v]}

 \begin{abstract}We consider the reciprocal cost function
\(
J(x)=\frac12(x+x^{-1})-1
\)
and its $n$-dimensional {extension} %MDPI: Independent Equation is not allowed in an abstract, only text is allowed, so we changed into inline equation, please confirm this revision. %MZ it is okay.
\(J(x_1,\ldots,x_n)
=
\frac12(R+R^{-1})-1,
R=\prod\limits_{i=1}^n x_i^{\alpha_i},
\alpha=(\alpha_1,\ldots,\alpha_n)\in\mathbb{R}^n\setminus\{0\}.\)
In logarithmic coordinates \(t_i=\log x_i\), the Hessian of \(J\) has a rank of one at every point. The associated Hessian geometry is degenerate and does not define a Riemannian metric.
To obtain a nondegenerate geometric structure, we introduce a family of Hessian metrics \(h_\lambda\).  Combining the rank-one tensor with the Hessian metric \(h_\lambda\), we construct a \mbox{\((1,1)\)-tensor} field \(A_\lambda\). Its trace normalization defines a projector \(P_\lambda\), which induces an almost product structure and the corresponding golden and metallic structures. 
We study several  properties of the projector \(P_\lambda\)
and the induced structures, including eigendistributions,
parallelism, integrability, and curvature. The construction is given in an arbitrary dimension, and explicit formulas are obtained in the two-dimensional case. In particular, we show that the projector \(P_\lambda\) is generally not parallel with respect to either the canonical flat affine connection or the Levi-Civita connection \(\nabla^\lambda\) of the Hessian metric \(h_\lambda\).
\bigskip

\noindent{{\bf Keywords.} Hessian geometry; golden structures; metallic structures; projector; reciprocal cost function.}

\medskip 

\noindent{\bf MSC (2020):} 53A15; 53C15; 53B20

\end{abstract}
\maketitle

\setcounter{tocdepth}{3}

%\tableofcontents

\newcommand{\config}{\mathcal{C}}
\newcommand{\configR}{\mathcal{C}_R}

\section{Motivation}

The golden ratio has been known since Euclid and appears under different names, such as the golden section, divine ratio, golden mean, or golden proportion. It occurs in nature, especially in patterns related to Fibonacci numbers, like phyllotaxis and certain~flowers.

It also appears in music, in~harmonic relations, and~in proportions of the human body. From~ancient times, it has played an important role in architecture and art, for~example in the proportions of temples, sculptures, and~paintings. The~golden ratio can be defined geometrically by dividing a segment into two parts such that the ratio of the whole to the larger part equals the ratio of the larger part to the smaller one. This ratio is the positive solution of the equation 
$x^2-x-1=0.$ 
It appears  in geometric figures such as the pentagon, decagon, and dodecagon. On~the other hand, let us consider the general quadratic equation
\[
x^2 - \alpha x - \beta = 0,
\]
where $\alpha$ and $\beta$ are positive integers. Its positive solution is
\[
\sigma_{\alpha,\beta} = \frac{\alpha + \sqrt{\alpha^2 + 4\beta}}{2},
\]
which defines the \emph{{metallic means family}}. %MDPI: Please confirm if the italics are necessary; if not, please remove them. The following highlights are the same. %MZ it is okay
 This family includes, for~instance, the~golden mean, the~silver mean, the~subtle mean, etc., and~was introduced by {Spinadel} %MDPI: We rearranged all the references citations to appear in numerical order. Please confirm this revision.
 \cite{spinadel1997, spinadel1999}. %MZ it is okay

{\color{black}The metallic means arise as limiting ratios of generalized secondary Fibonacci sequences (GSFSs)} (see~\cite{spinadel2000, stakhov2007})  
\[
G(n+1)=pG(n)+qG(n-1), \qquad n\ge 1,
\]
with $G(0)=a\in \mathbb{R}$, $G(1)=b\in \mathbb{R}$ and $p,q\in\mathbb{R}$. The~ratio ${G(n+1)}/{G(n)}$ of two consecutive terms of GSFSs converges~to 
\begin{itemize}
\item The golden mean 
\(
\varphi=\frac{1+\sqrt{5}}{2},
\)
for $p=q=1$, determined by the ratio of two consecutive classical Fibonacci numbers;

\item The silver mean 
\(
\sigma_{2,1}=1+\sqrt{2},
\)
for $p=2$ and $q=1$, determined by the ratio of two consecutive Pell numbers;

\item The bronze mean 
\(
\sigma_{3,1}=\frac{3+\sqrt{13}}{2},
\)
for $p=3$ and $q=1$;

\item The subtle mean 
\(
\sigma_{4,1}=2+\sqrt{5}=\varphi^3,
\)
for $p=4$ and $q=1$;

\item The copper mean 
\(
\sigma_{1,2}=2,
\)
for $p=1$ and $q=2$;

\item The nickel mean 
\(
\sigma_{1,3}=\frac{1+\sqrt{13}}{2},
\)
for $p=1$ and $q=3$.
\end{itemize}

{In} %MDPI: We added indention for this paragraph. Please confirm this revision. %MZ it is okay
 the case $q=1$ and $p=k$, one gets the $k$-Fibonacci sequence
\[
F_{k,n+1}=kF_{k,n}+F_{k,n-1}, \qquad F_{k,0}=0,\; F_{k,1}=1,
\]
which generalizes the classical Fibonacci sequence. {\color{black}These metallic means will appear later as the eigenvalues of the cost-induced metallic operators \(M^\lambda_{p,q}\).}

  The golden ratio appears in  quasicrystals, dynamical systems, and~certain models in mathematical physics (see, e.g.,~\cite{Coldea,Livio, Penrose, Steinhardt} and references therein).

Following~\cite{WZ,WZ1}, we consider the canonical reciprocal cost function in one {dimension} %MDPI: (1). The italics of variables with the same meaning should be consistent in the full text. Please check the full text and modify. (2). Please carefully check to make sure there are no Duplicate Equations throughout the text, thank you. %MZ it is okay. thanks. 
\begin{equation}\label{eq:Jintro}
J(x) = \frac{1}{2}\!\left(x + x^{-1}\right) - 1,\qquad x>0.
\end{equation}
 {Cost} %MDPI: Please confirm if paragraphs without indentation below equations should be retained, please check all. %MZ There is no need change. 
 functions are ubiquitous in optimization problems, and~different cost functions can have different motivations. In~\cite{WZ}, it is proven that this particular function appears as a unique solution of the polynomial composition law together with the curvature calibration.  {\color{black}{This identifies \eqref{eq:Jintro} as the canonical reciprocal cost. The~quadratic equation \(x^{2}=x+1\), which defines the golden ratio, is the simplest case of the polynomial composition law, while the curvature calibration determines the second-order behaviour of \(J\) at its minimum. These properties motivate the study of the geometric structures induced by the reciprocal \mbox{cost function}.}}

 For \(\alpha=(\alpha_1,\dots,\alpha_n)\in\R^n\setminus\{0\}\) the \(n\)-dimensional extension obtained by composing \eqref{eq:Jintro} with  \(R=\prod_i x_i^{\alpha_i}\) is
\[
J(x_1,\dots,x_n)=\frac12(R+R^{-1})-1,
\]
{so that \(J(t)=\cosh(\alpha\cdot t)-1\) in logarithmic coordinates. The~Hessian of \(J\) is then the {\color{black} rank-one tensor
\(
\tilde g:=\nabla^2 J=\cosh(\alpha\cdot t)\,\alpha\otimes\alpha,
\)}
which is positive semidefinite of rank one and  does not define a Riemannian metric. {\color{black}The rank-one property is the starting point of the construction. It determines a single distinguished direction generated by the gradient of \(\log R\), from~which the projector and the induced almost product, golden, and~metallic structures are obtained.}

Let \(M\) be a smooth manifold and \(I\) the identity endomorphism of the tangent bundle \(TM\). A~\((1,1)\)-tensor field \(Q\) on \(M\) is called a \emph{{polynomial structure}} %MZ it is okay
if it satisfies a polynomial identity \(P(Q)=0\). The~two quadratic cases studied in this paper are the \emph{{golden structure}} 
%MZ it is okay

~\vspace{-12pt}\begin{equation}Q^2=Q+I,\end{equation} which is motivated by the classical golden ratio equation. More generally, for~positive integers \(p,q\), the~{\it \((p,q)\)-{metallic structure}} %MZ it is okay, and also is okay for the rest italic text ine the paper
(see, e.g.,~\cite{hretcanu2013}) is defined by
\begin{equation}\label{eq:metallic-intro}
Q^2 = p\,Q + q\,I.
\end{equation}

Both golden and metallic structures belong to a broader class of polynomial structures introduced by Goldberg and Yano~\cite{goldberg}. Golden structures on differentiable manifolds were first introduced by Hrețcanu and Crășmareanu~\cite{hretcanu2007}, who further developed their properties in~\cite{crasmareanu2008}. Using an approach similar to the one developed for golden structures, Hrețcanu and 
Crășmareanu studied the  metallic structures on Riemannian manifolds in~\cite{hretcanu2013}. {\color{black}Since then, golden and metallic structures have become an active research topic in differential geometry, with~a comprehensive recent survey~\cite{chen} and many papers devoted to new classes of these structures, their integrability,  and~applications on Riemannian manifolds (see, e.g.,~\cite{beldjilali2020, gezer2013, gherici2019, hretcanu2009, ozkan2014}). The~golden ratio and metallic means also have applications in the physical sciences, geometry, engineering, architecture, and~design. These applications provide additional motivation for the paper.}

%\medskip

{To obtain a nondegenerate geometric structure, we combine the rank-one tensor  \(\tilde g\) with a one-parameter family of Hessian metrics
\[
h_\lambda=\nabla_x^2\Phi_\lambda,
\qquad
\Phi_\lambda(x)=\sum_{i=1}^n J(x_i)+\lambda J(R),
\qquad \lambda\in\mathbb R.
\]

{\color{black}The Hessian metric \(h_\lambda\) determines the associated
\((1,1)\)-tensor \(A_\lambda\) by
\(
h_\lambda(A_\lambda X,Y)=\tilde g(X,Y),
\)
and its normalization gives the projector \(P_\lambda\).} The corresponding almost product, golden, and~metallic structures are obtained from \(P_\lambda\).

We study several  properties of these structures, including eigendistributions, parallelism, integrability, and~curvature. The~parameter \(\lambda\) deforms the Hessian metric by the term \(\lambda J(R)\),  producing a one-parameter family of Hessian metrics associated with the reciprocal cost~function.

The projector construction is general and can be applied to other rank-one tensors and nondegenerate metrics. In~this paper, we consider the projector induced by the rank-one tensor \(\tilde g\) and the Hessian metrics \(h_\lambda\) arising from reciprocal cost~geometry.

{\color{black}The paper is organized as follows. Section~\ref{sec:prelim} introduces the reciprocal cost geometry and the associated rank-one Hessian tensor \(\tilde g\). In~Section~\ref{sec3}, we construct the cost-induced projector \(P_\lambda\) and the induced almost product, golden, and~metallic structures. Section~\ref{sec:ndim} presents the general \(n\)-dimensional construction and studies the properties of the induced structures, including their eigendistributions, parallelism, integrability, and~curvature, with~the main results summarized in Theorem~\ref{thm:main-ndim}. Finally, Section~\ref{sec:2d} illustrates the construction in the two-dimensional case, where explicit formulas for \(A_\lambda\) and \(P_\lambda\) are derived.}

\section{Definitions and Basic~Properties}\label{sec:prelim}
 
%%%
%%%%%%
Let \((M,g)\) be a Riemannian manifold, let \(I\) denote the identity on \(TM\), and~let \(Q\colon TM\to TM\) be a \((1,1)\)-tensor~field.

\begin{Definition}
A $(1,1)$-tensor field $Q$ on $M$ is called a {polynomial structure} %MDPI: We added the italics. Please confirm this revision. % MZ: Use roman font here (not italics) to emphasize it, since the definition is already italicized.
 if it satisfies a polynomial relation of the form\vspace{-6pt}
\begin{align}\label{E-pol-n}
 Q^n + a_{n-1}Q^{n-1} + \cdots + a_1 Q + a_0 I = 0,
\end{align}
where $I$ is the identity operator on $TM$ and $a_i\in\mathbb{R}$.
\end{Definition}
Golden and metallic structures are special cases of polynomial structures.  In~particular,
\(Q^2=-I\) defines an almost complex structure,
\(Q^2=I\) defines an almost product structure, and~\(Q^2=0\) defines an almost tangent structure (see, e.g.,~\cite{Yano}).

\begin{Definition}
For integers $p,q$, a~$(1,1)$-tensor field $Q$ is called a {$(p,q)${-metallic structure}} %MDPI: We added the italics. Please confirm this revision. % MZ: Use roman font here (not italics) to emphasize it, since the definition is already italicized.
 if
\begin{equation}\label{E-metallic}
Q^2 = p\,Q + q\,I.
\end{equation}
\end{Definition}
%\medskip

A Riemannian metric $g$ is called {\it $Q${-compatible}} if
\begin{equation}\label{comp}
g(X,QY)=g(QX,Y), \qquad X,Y\in\Gamma(TM).
\end{equation}
When \(Q\) is a \((p,q)\)-metallic structure and \(g\) is \(Q\)-compatible, the~pair \((g,Q)\) is called a \emph{{metallic Riemannian structure}}. In~the particular case \(p=q=1\),
the pair \((g,Q)\) is called a \emph{{golden Riemannian structure}}
\cite{crasmareanu2008,hretcanu2007}. 
%\medskip

Replacing $X$ by $QX$ in \eqref{comp} and using \eqref{E-metallic}, we obtain
\[
g(QX,QY)=p\,g(X,QY)+q\,g(X,Y).
\]

% On every open subset where \%(h_\lambda\) is positive definite,
%\(
%(\mathbb %R_{>0}^n,h_\lambda,G_\lambda)
%\)
%defines a golden Riemannian %manifold in the sense of
%\cite{hretcanu2007,crasmareanu2008}.

It is known that a decomposition of the tangent bundle of a differentiable manifold $M$
into complementary distributions can be described in terms of projector operators.
For instance, let $T_1,\ldots,T_k$ be differentiable distributions on $M$ such that for every point $p\in M$, one has
\[
T_pM = T_1(p)\oplus \cdots \oplus T_k(p).
\]
This decomposition can be equivalently expressed by a family of $(1,1)$-tensor
fields $\pi_i$, $i=1,\ldots,k$, called {\it {projectors}}, satisfying
\[
\sum_{i=1}^k \pi_i = I, \qquad \pi_i\pi_j = \delta^i_j\,\pi_i,
\]
where $\delta^i_j$ are the Kronecker symbols. In~this case, $T_i=\operatorname{Im}(\pi_i)$.

In the case \(k=2\), such a decomposition determines an almost product structure.
Indeed, if~$\pi$ is one of the projectors, then define
\[
F = 2\pi - I,
\]
and obtain a $(1,1)$-tensor field satisfying $F^2 = I$. 

Conversely, any almost product structure \(F\)  induces the complementary projectors
\[
\pi^+=\frac12(I+F), \qquad \pi^-=\frac12(I-F),
\]
and the decomposition
\[
T_pM=T^+(p)\oplus T^-(p),
\]
where
\[
T^\pm(p)=\{v\in T_pM : Fv=\pm v\}.
\]

\begin{Theorem}[\cite{crasmareanu2008}]
Let $(M,g,Q)$ be a golden Riemannian manifold. Then,
\begin{equation}\label{2.4}
Q^{n} = f_n Q + f_{n-1} I
\end{equation}
for every integer $n>0$, where $(f_n)_n$ is the Fibonacci sequence.
\end{Theorem}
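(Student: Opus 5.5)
The plan is a straightforward induction on $n$ that uses only the golden identity $Q^2=Q+I$. The metric $g$ and the compatibility condition \eqref{comp} play no role: the statement is purely algebraic and holds for any $(1,1)$-tensor field satisfying \eqref{E-metallic} with $p=q=1$. We use the convention $f_0=0$, $f_1=1$, $f_{n+1}=f_n+f_{n-1}$.

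The base case $n=1$ is immediate, since $f_1Q+f_0I=Q$. As a sanity check, $n=2$ reproduces the defining relation, because $f_2=1$ and $f_1=1$ give $Q^2=Q+I$.

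For the inductive step, assume $Q^{n}=f_nQ+f_{n-1}I$. Multiplying on the left by $Q$ gives
\[
Q^{n+1}=f_nQ^2+f_{n-1}Q .
\]
Substituting $Q^2=Q+I$ turns this into
\[
Q^{n+1}=f_n(Q+I)+f_{n-1}Q=(f_n+f_{n-1})Q+f_nI .
\]
The Fibonacci recursion then rewrites the right-hand side as $f_{n+1}Q+f_nI$, which closes the induction.

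There is no real obstacle here. The only point needing care is fixing the indexing convention $f_0=0$, $f_1=1$, so that the base case and the recursion match. The same argument shows $Q^n=G_nQ+qG_{n-1}I$ for a $(p,q)$-metallic structure, where $G_{n+1}=pG_n+qG_{n-1}$, $G_0=0$, $G_1=1$, which is the form used later for the metallic operators.
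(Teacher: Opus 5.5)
Your induction is correct and complete. You are also right that only $Q^2=Q+I$ is used, not the metric. The paper does not prove this statement; it cites it from \cite{crasmareanu2008}, and your induction is the standard argument for it. Your indexing $f_0=0$, $f_1=1$ agrees with the Binet formula the paper writes immediately afterward.
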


Using Binet's formula, relation \eqref{2.4} can be written as
\[
Q^{n}
=
f_n Q + f_{n-1} I
=
\frac{\varphi^{n} - (1-\varphi)^{n}}{\sqrt{5}}\, Q
+
\frac{\varphi^{n-1} - (1-\varphi)^{n-1}}{\sqrt{5}}\, I,
\]
for every positive integer \(n\).

\subsection*{{Reciprocal Cost~Geometry}} %MDPI: If there is only one subsection within a section, it should not be numbered. We have thus removed this section number. Please confirm.
\label{sec:cost-geometry} 

The main point of our construction is related to the canonical reciprocal cost function \(
J:\mathbb{R}_{>0}\to\mathbb{R},\)
\begin{equation}\label{cost}
J(x)=\frac12(x+x^{-1})-1,
\end{equation}
which is the unique solution of the polynomial composition law together with the curvature calibration (for more details, see~\cite{WZ}). The~function \(J\) is reciprocal,
\(
J(x)=J(x^{-1}),
\)
non-negative, with~a minimum at \(x=1\). {\color{black} The term \emph{{cost}} comes from the fact that \(J\) is non-negative, vanishes only at the balanced state \(x=1\), and~increases as the ratio \(x\) departs from \(1\) in either direction. Thus, \(J(x)\) measures the cost of imbalance. The~term \emph{{reciprocal}} refers to the symmetry
\(
J(x)=J(x^{-1}),
\)
which means that a ratio and its reciprocal have the same cost. This interpretation is used in the reciprocal cost geometry developed in~\cite{WZ,WZP}, where \(J\) plays a role similar to a divergence or discrepancy function in optimization and information geometry.} In logarithmic coordinates,
\[
J(e^t)=\cosh(t)-1.
\]
Near \(t=0\), one has
\[
J(e^t)=\frac{t^2}{2}+O(t^4).
\]
{\color{black}Thus, near~the balanced state \(x=1\), the~reciprocal cost function is approximated by the quadratic function \(\frac12 t^2\). The~function \(J\) extends this quadratic model to all positive ratios while preserving reciprocity and convexity.}

Among many possible multidimensional extensions, the~form considered here is motivated by the multiplicative structure of the one-dimensional reciprocal cost and by the logarithmic representation
\(
J(e^t)=\cosh(t)-1.
\)

We study the family of reciprocal cost functions (see, e.g.,~\cite{WZP})
\begin{equation}\label{mult}
J(x_1,\dots,x_n)=\frac12(R+R^{-1})-1,
\qquad R=\prod_{i=1}^n x_i^{\alpha_i},
\end{equation}
where \(x_i>0\) and \(\alpha=(\alpha_1,\ldots,\alpha_n)\in\R^n\setminus\{0\}\). In~logarithmic coordinates \(t_i=\log x_i\), the~function \eqref{mult} takes the form
\[
J(t)=\cosh(\alpha\cdot t)-1.
\]

Therefore, the~function \(J\) depends only on the scalar  
\(
S(t)=\alpha\cdot t=\sum_{i=1}^n \alpha_i t_i.
\)  Its Hessian is  the rank-one tensor
\[
\nabla^2J
=
\cosh(\alpha\cdot t)
\left(
\sum_{i=1}^n \alpha_i\,dt_i
\right)
\otimes
\left(
\sum_{i=1}^n \alpha_i\,dt_i
\right).
\] 
Since \(\nabla^2J\) has a rank of one, the~tensor
\(
\tilde g:=\nabla^2J
\)
is degenerate for \(n\ge2\).

The induced geometry is degenerate, with~a distinguished direction generated by \(\alpha\) and an integrable \((n-1)\)-dimensional null distribution. In~particular, the~ambient space is \(n\)-dimensional, and the associated Hessian structure in logarithmic coordinates reduces to a one-dimensional geometry. Hessian geometry and Hessian manifolds play an important role in affine differential geometry and information geometry (see, e.g.,~\cite{amari,shima2007}).

{The rank-one property of this Hessian tensor motivates the construction of additional geometric structures. To~obtain a nondegenerate geometric structure, we combine the rank-one tensor associated with the reciprocal cost geometry with a family of Hessian metrics. This construction produces an associated \((1,1)\)-tensor field whose normalization defines a projector. The~projector then induces an almost product structure and the corresponding golden and metallic~structures.

{\color{black}The next section develops the projector construction together with the induced almost product, golden, and~metallic structures. The~general \(n\)-dimensional construction is presented in Section~\ref{sec:ndim}, and~the two-dimensional case is studied in Section~\ref{sec:2d}.}

\section{The Cost-Induced Projector, Golden, and~Metallic~Structures}\label{sec3}

Let $M$ be a smooth manifold, let $g$ be a nondegenerate metric on $M$, and~let $\tilde g$ be a positive semidefinite symmetric $(0,2)$-tensor field of rank one. %Since $\tilde g$ has rank one, 
On an open set $U\subseteq M$ where $\tilde g\ne 0$, there exists a vector field $V$ such that
\begin{equation}\label{metric}
\tilde g(X,Y)=g(V,X)\,g(V,Y),
\qquad X,Y \in \Gamma(TU).
\end{equation}
The associated $(1,1)$-tensor field $A$ is defined by
\begin{equation}\label{aa}
g(AX,Y) = \tilde g(X,Y)
\qquad  X,Y \in \Gamma(TU).
\end{equation} 
%In local coordinates, the last equation becomes
%\[
%A^{i}{}_{j} =g^{ik}\tilde g_{kj},
%\]
%where $g_{ik}g^{kj}=\delta_i^j$. 
On $U$, using \eqref{metric}, we obtain
\begin{equation}\label{defA}
AX = g(V,X)\,V,
\qquad X\in \Gamma(TU).
\end{equation}
We have
\[g(AX,Y)=\tilde g(X,Y)=\tilde g(Y,X)=g(AY,X)=g(X,AY)\]

\begin{Lemma}\label{lem31}
The tensor $A$ defined by \eqref{aa} satisfies
\[
A^2 = \mu A,
\] where
$\mu = g(V,V).$
Moreover, 
\(
\mu = \tr(A).
\)
\end{Lemma}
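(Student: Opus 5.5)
The plan is a direct computation from the explicit form \eqref{defA}, $AX=g(V,X)\,V$, valid on $U$. First, apply $A$ twice to an arbitrary $X\in\Gamma(TU)$. Since $g(V,X)$ is a scalar function, linearity of $A$ over functions gives
\[
A^2X=A\bigl(g(V,X)V\bigr)=g(V,X)\,AV=g(V,X)\,g(V,V)\,V=\mu\,AX ,
\]
with $\mu=g(V,V)$. As $X$ is arbitrary, this yields $A^2=\mu A$.

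For the trace identity, I will compute $\tr(A)$ in local coordinates or with a local frame. Writing $V=V^i\partial_i$ and $g(V,X)=V_jX^j$ with $V_j=g_{jk}V^k$, the components of $A$ are $A^i{}_j=V^iV_j$. Hence
\[
\tr(A)=A^i{}_i=V^iV_i=g_{ik}V^iV^k=g(V,V)=\mu .
\]
Equivalently, $A=V\otimes g(V,\cdot)$ is a rank-one endomorphism $v\otimes\theta$, and the trace of such an endomorphism is $\theta(v)$.

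There is essentially no obstacle. The only point to check is that $\mu$ may vanish when $g$ is indefinite, but this does not affect either identity. The identities are stated only on $U$, where $V$ exists by \eqref{metric}; $V$ is determined only up to sign there, which leaves $A$ and $\mu$ unchanged.
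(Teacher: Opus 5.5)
Your proof is correct and follows the paper's argument: the same direct computation $A^2X=A\bigl(g(V,X)V\bigr)=g(V,V)\,AX$ from \eqref{defA}, followed by reading off the trace. Your component computation $A^i{}_j=V^iV_j$, $\tr(A)=V^iV_i=g(V,V)$ spells out the trace step, which the paper only asserts.
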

\begin{proof}
By \eqref{defA}, 
\(
A^2X =  A\bigl(g(V,X)V\bigr)
= g(V,V)\,AX,
\)
so $A^2=\mu A$ with $\mu=g(V,V)$.
Consequently,%From $AX=g(V,X)V$, we obtain
\[
\tr(A)=g(V,V)=\mu.
\]
\end{proof}
\begin{Corollary}\label{cor1}
On the open subset \(U\subseteq M\) where
\(
\mu=g(V,V)\neq0,
\)
the tensor
\begin{equation}\label{proojj}
P := \frac{1}{\mu}A
\end{equation}
is a projector. Moreover,
\[
\im(P)=\mathrm{span}\{V\},
\qquad
\ker(P)=\{X\in TM|_U : g(V,X)=0\}.
\]
Hence,
\(
TM|_U=\im(P)\oplus\ker(P).
\)
\end{Corollary}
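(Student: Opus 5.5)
The plan is to derive everything from Lemma~\ref{lem31} together with the explicit formula \eqref{defA}. First, since $\mu=g(V,V)$ is a smooth function that is nonzero on $U$, the tensor $P=\frac1\mu A$ is a smooth $(1,1)$-tensor field there. Idempotency follows at once from $A^2=\mu A$:
\[
P^2=\frac{1}{\mu^2}A^2=\frac{1}{\mu^2}\,\mu A=\frac1\mu A=P .
\]
I will also note that $P$ is $g$-self-adjoint. This follows from the symmetry $g(AX,Y)=g(X,AY)$ shown just before Lemma~\ref{lem31}, although idempotency is all that is needed for $P$ to be a projector.

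Next I identify the image and kernel using $PX=\frac{g(V,X)}{\mu}V$. For the image, every $PX$ is a multiple of $V$, so $\im(P)\subseteq\mathrm{span}\{V\}$. Conversely, $PV=\frac{g(V,V)}{\mu}V=V$, so $V\in\im(P)$, and $V\neq0$ because $\mu\neq0$. Hence $\im(P)=\mathrm{span}\{V\}$, a line field on $U$. For the kernel, since $V\neq0$, we have $PX=0$ if and only if $g(V,X)=0$. This gives exactly the stated description of $\ker(P)$.

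Finally, the splitting comes from the identity $X=PX+(X-PX)$. Here $PX\in\im(P)$, and $X-PX\in\ker(P)$ because $P(X-PX)=PX-P^2X=0$. The intersection is trivial: if $X=cV$ with $g(V,X)=0$, then $c\mu=0$, so $c=0$. This step is where $\mu\neq0$ is essential. If $g$ is indefinite and $V$ is null, then $V$ lies in its own $g$-orthogonal complement and the decomposition fails. That nondegeneracy point is the only subtle part; the rest is routine linear algebra carried out pointwise, with smoothness of the distributions inherited from the smoothness of $V$ and $\mu$.
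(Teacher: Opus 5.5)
Your proposal is correct and follows essentially the same route as the paper: idempotency from $A^2=\mu A$, the image and kernel read off from $PX=\frac{g(V,X)}{g(V,V)}V$, and the splitting from $X=PX+(X-PX)$. Your explicit checks that $PV=V$ and that $\im(P)\cap\ker(P)=0$ are details the paper leaves implicit; the latter holds automatically for any idempotent.
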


\begin{proof}
Since \(A^2=\mu A\) and \(\mu\neq0\), we have
\[
P^2=\frac1{\mu^2}A^2=\frac1\mu A=P,
\]
so \(P\) is a projector. Moreover,
\[
PX=\frac{g(V,X)}{g(V,V)}\,V,
\]
hence, \(\im(P)=\operatorname{span}\{V\}\) and\vspace{-4pt}
\[
\ker(P)=\{X\in TM|_U:\ g(V,X)=0\}.
\]
Finally, every vector field $X\in \Gamma(TM|_U)$ decomposes as
\(
X=PX+(X-PX),
\)
where $PX\in\im(P)$ and $X-PX\in\ker(P)$. Therefore,
\(
TM|_U=\im(P)\oplus\ker(P).
\)
\end{proof}

%%%OK je
We now use the projector \(P\) to construct the induced almost product, golden, and~metallic structures. Starting from the projector $P$ given by \eqref{proojj} and induced splitting\vspace{-4pt}
\[
TM|_U = \im(P)\oplus\ker(P),
\]  we obtain these structures.  The~constructions below follow from the identity $P^2=P$ and hold for any projector. The~reciprocal cost geometry provides a particular projector to which this construction is applied.
Let us define\vspace{-4pt}
\[
F := 2P - I.
\]

\begin{Proposition}
The tensor $F$ satisfies
\[
F^2 = I.
\]
Moreover, $F|_{\im(P)} = I$ and $F|_{\ker(P)} = -I$.
\end{Proposition}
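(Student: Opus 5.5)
The plan is a direct algebraic computation using only the projector identity $P^2=P$ from Corollary~\ref{cor1}; no properties of the reciprocal cost geometry are needed. First I expand the square of $F=2P-I$, using that $P$ and $I$ commute:
\[
F^2=(2P-I)^2=4P^2-4P+I .
\]
Substituting $P^2=P$ gives $4P-4P+I=I$, which is the first claim.

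For the eigenspace statements, I take $X\in\im(P)$ and write $X=PY$. Then $PX=P^2Y=PY=X$, so $P$ acts as the identity on its image. Hence
\[
FX=2PX-X=2X-X=X .
\]
If instead $X\in\ker(P)$, then $PX=0$ and so $FX=-X$. These two computations give $F|_{\im(P)}=I$ and $F|_{\ker(P)}=-I$.

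Finally, I would note that the decomposition $TM|_U=\im(P)\oplus\ker(P)$ from Corollary~\ref{cor1} shows these are exactly the $\pm1$ eigendistributions $T^\pm$ of $F$. This makes $F$ the almost product structure associated with the splitting, as in Section~\ref{sec:prelim}.

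There is no genuine obstacle here. The only point needing care is to verify $P|_{\im(P)}=I$ via idempotency rather than assume it; in the present setting it can also be read off from the explicit form $PX=g(V,X)V/g(V,V)$, since $PV=V$.
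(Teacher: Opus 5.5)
Your proof is correct and is essentially the paper's argument: expand $(2P-I)^2=4P^2-4P+I$, use $P^2=P$, then evaluate $F$ on $\im(P)$ and $\ker(P)$. Your explicit idempotency argument for $PX=X$ when $X\in\im(P)$ is a small step the paper states without justification.
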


\begin{proof}
Since $F=2P-I$ and $P$ is a projector, i.e.,~$P^2=P$, we have
\[
F^2 = (2P-I)^2 = 4P^2 - 4P + I = I.
\]
Moreover, for~$X\in\im(P)$, one has $PX=X$, so
\(
FX= X.
\)
For $X\in\ker(P)$, one has $PX=0$, so
\(
FX= -X.
\)
\end{proof}
%\medskip

Let us now consider an operator 
of the form
\[
G = \alpha P + \beta (I-P),
\qquad \alpha,\beta\in\mathbb{R}.
\]
Since $P^2=P$ and $P(I-P)=0$, we have
\[
G^2 = \alpha^2 P + \beta^2 (I-P).
\]
We require that $G$ satisfy the golden equation
\(
G^2 = G + I;
\)
then,
\[
\alpha^2 = \alpha + 1,
\qquad
\beta^2 = \beta + 1.
\]
Thus, $\alpha$ and $\beta$ are roots of the equation\vspace{-4pt}
\[
x^2 = x + 1,
\]
which has two solutions:\vspace{-4pt}
\[
\varphi=\frac{1+\sqrt5}{2}, 
\qquad 
1-\varphi=\frac{1-\sqrt5}{2}.
\]
{\color{black}Since the roots are distinct, we define the golden structure by taking the eigenvalue
\(\varphi\) on \(\operatorname{Im}(P)\) and \(1-\varphi=-\varphi^{-1}\) on
\(\ker(P)\):
\[
\alpha=\varphi,\qquad
\beta=1-\varphi=-\varphi^{-1}.
\]}
Therefore,
\[
G=\varphi P+(1-\varphi)(I-P).
\]
or equivalently, since $F=2P-I$,
\begin{equation}\label{ggold}
G=\frac12(I+\sqrt5\,F).
\end{equation}
A direct computation shows that \(G\) satisfies
\[
G^2=G+I.
\]
Thus, the golden structure is induced by the~projector.

\begin{Corollary}
The operator $G$ has eigenvalues $\varphi$ on $\im(P)$ and $-\varphi^{-1}$ on $\ker(P)$.
\end{Corollary}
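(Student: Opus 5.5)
The plan is to evaluate $G$ directly on each summand of the splitting $TM|_U=\im(P)\oplus\ker(P)$ from Corollary~\ref{cor1}, using the representation $G=\varphi P+(1-\varphi)(I-P)$.

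First, take $X\in\im(P)$. Since $P^2=P$, we have $PX=X$, and therefore $(I-P)X=0$. Substituting into the formula gives $GX=\varphi X$. Next, take $X\in\ker(P)$. Here $PX=0$ and $(I-P)X=X$, so $GX=(1-\varphi)X$.

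It remains to identify $1-\varphi$ with $-\varphi^{-1}$. From $\varphi^2=\varphi+1$ we get $\varphi(\varphi-1)=1$, so $\varphi-1=\varphi^{-1}$, that is, $1-\varphi=-\varphi^{-1}$.

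As a cross-check, the same result follows from \eqref{ggold} together with the Proposition giving $F=\pm I$ on the two summands. Indeed, $\frac12(1+\sqrt5)=\varphi$ and $\frac12(1-\sqrt5)=1-\varphi$.

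Because $TM|_U$ is the direct sum of these two subspaces, there are no other eigenvalues. The eigenspaces are exactly $\im(P)$ for $\varphi$ and $\ker(P)$ for $-\varphi^{-1}$, and they are distinct since $\varphi\neq-\varphi^{-1}$.

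There is no real obstacle here. The only point requiring care is the algebraic identity $1-\varphi=-\varphi^{-1}$, which comes directly from the golden equation.
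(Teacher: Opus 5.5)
Your proof is correct and is essentially the paper's argument. The paper evaluates $G=\frac12(I+\sqrt5\,F)$ using $F=\pm I$ on $\im(P)$ and $\ker(P)$, which is exactly your cross-check. Your main line uses the equivalent form $G=\varphi P+(1-\varphi)(I-P)$ together with the identity $1-\varphi=-\varphi^{-1}$, which follows from $\varphi^2=\varphi+1$.
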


\begin{proof}
On $\im(P)$, one has $F=I$; hence, $G=\frac{1}{2}(1+\sqrt{5})I=\varphi I$.
On $\ker(P)$, one has $F=-I$; hence, $G=\frac{1}{2}(1-\sqrt{5})I=-\varphi^{-1}I$.
\end{proof}

%\medskip

Let us now, for~ \(p,q\in\mathbb N\), define
\begin{equation}\label{metal}    
M_{p,q}
:= \frac{p}{2}I + \frac{1}{2}\sqrt{p^2+4q}\,F.
\end{equation}

\begin{Theorem}
The operator $M_{p,q}$ given by \eqref{metal} satisfies
\[
M_{p,q}^2 = p\,M_{p,q} + q\,I.
\]
\end{Theorem}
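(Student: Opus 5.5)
The plan is to verify the identity directly from the algebraic relation \(F^2=I\) established in the Proposition above; nothing beyond that relation is needed. Set \(s:=\sqrt{p^2+4q}\), so \(M_{p,q}=\tfrac{p}{2}I+\tfrac{s}{2}F\), and expand both sides as linear combinations of \(I\) and \(F\). Since \(I\) and \(F\) commute, the square expands like a binomial.

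\textbf{Step 1 (left-hand side).} Expand
\[
M_{p,q}^2=\frac{p^2}{4}I+\frac{ps}{2}F+\frac{s^2}{4}F^2 .
\]
Use \(F^2=I\) and \(s^2=p^2+4q\) to reduce this to
\[
M_{p,q}^2=\Bigl(\frac{p^2}{2}+q\Bigr)I+\frac{ps}{2}F .
\]

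\textbf{Step 2 (right-hand side).} Compute
\[
pM_{p,q}+qI=\Bigl(\frac{p^2}{2}+q\Bigr)I+\frac{ps}{2}F .
\]
Comparing coefficients of \(I\) and \(F\) with Step 1 gives the identity.

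As a cross-check, one can argue eigenspace by eigenspace using the splitting \(TM|_U=\im(P)\oplus\ker(P)\). On \(\im(P)\) the operator \(M_{p,q}\) acts as \(\frac{p+s}{2}=\sigma_{p,q}\), and on \(\ker(P)\) it acts as \(\frac{p-s}{2}\). These are exactly the two roots of \(x^2=px+q\), so the operator identity holds on each summand, hence on \(TM|_U\).

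There is no genuine obstacle here. The only point to watch is that the argument uses nothing beyond \(F^2=I\) and \(p^2+4q\ge0\), the latter guaranteeing that \(s\) is real. The latter holds for \(p,q\in\mathbb N\), so the result is valid for any projector \(P\), in particular the cost-induced one.
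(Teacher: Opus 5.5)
Your proposal is correct and is essentially the paper's proof: both expand $M_{p,q}^2$ using $F^2=I$ to obtain $\bigl(\frac{p^2}{2}+q\bigr)I+\frac{p}{2}\sqrt{p^2+4q}\,F$, then check that $pM_{p,q}+qI$ is the same expression. Your eigenspace cross-check on $\im(P)\oplus\ker(P)$ is valid but not needed.
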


{  \begin{proof}
Using \(F^2=I\), from~\eqref{metal}, we obtain
\[
M_{p,q}^2
=
\left(
\frac{p}{2}I+\frac12\sqrt{p^2+4q}\,F
\right)^2
=
\left(
\frac{p^2}{4}+\frac{p^2+4q}{4}
\right)I
+
\frac{p}{2}\sqrt{p^2+4q}\,F.
\]
Therefore,
\[
M_{p,q}^2
=
\left(
\frac{p^2}{2}+q
\right)I
+
\frac{p}{2}\sqrt{p^2+4q}\,F.
\]
On the other hand,
\[
p\,M_{p,q}+qI
=
p\left(
\frac{p}{2}I+\frac12\sqrt{p^2+4q}\,F
\right)+qI=
\left(
\frac{p^2}{2}+q
\right)I
+
\frac{p}{2}\sqrt{p^2+4q}\,F.
\]
which coincides with
\(
M_{p,q}^2.\)
\end{proof}}
{\color{black}By construction, each operator in the family \(\{M_{p,q}\}_{p,q\in\mathbb N}\) is a
polynomial expression in the projector \(P\), and~the golden structure \(G=M_{1,1}\)
corresponds to the case \(p=q=1\). The~golden case \(p=q=1\) is distinguished within
this construction, while the general metallic structures \(M_{p,q}\) are obtained by
choosing the parameters \(p,q\) and  thus form  a generalization of the golden case.}
%\medskip

The following proposition gives the main properties of a general metallic structure.
\begin{Proposition}[Hreţcanu--Crăşmăreanu~\cite{hretcanu2013}]\label{prop2}
Let $M_{p,q}$ be defined by
\[
M_{p,q} = \frac{p}{2}I + \frac{1}{2}\sqrt{p^2+4q}\,F,
\qquad p,q\in\mathbb N,
\]
where $F^2=I$. Then, the following properties hold:

\begin{enumerate}
\item For every integer $n\ge 1$,
\[
M_{p,q}^n = G(n)\,M_{p,q} + q\,G(n-1)\,I,
\]
where $(G(n))_{n\ge 0}$ is the generalized secondary Fibonacci sequence defined by
\[
G(n+1)=pG(n)+qG(n-1),
\qquad
G(0)=0,\quad G(1)=1.
\]

\item The operator $M_{p,q}$ is an isomorphism on each tangent space $T_xM$, hence invertible. Its inverse is polynomial (of quadratic type, but~not metallic) and is given by
\[
\bar M_{p,q}=M_{p,q}^{-1} = \frac{1}{q}M_{p,q} - \frac{p}{q}I.
\]
It satisfies
\[
q \bar M_{p,q}^{2} + p \bar M_{p,q} - I = 0.
\]

\item The eigenvalues of $M_{p,q}$ are
\[
\frac{p+\sqrt{p^2+4q}}{2},
\qquad
\frac{p-\sqrt{p^2+4q}}{2}.
\]
\end{enumerate}
\end{Proposition}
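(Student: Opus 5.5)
The whole argument rests on the metallic identity $M_{p,q}^2=pM_{p,q}+qI$, already proved in the preceding Theorem, together with $F^2=I$. I will treat the three items in order. Item (3) is the easiest, so I mention first that it also follows directly from the eigenspace description of $F$.

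\textbf{Item (1).} I will argue by induction on $n$, with $G(0)=0$ and $G(1)=1$.

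For $n=1$ the claim reads $M_{p,q}=1\cdot M_{p,q}+q\cdot 0\cdot I$, which holds trivially.

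Assume $M_{p,q}^n=G(n)M_{p,q}+qG(n-1)I$. Multiplying by $M_{p,q}$ and substituting $M_{p,q}^2=pM_{p,q}+qI$ gives
\[
M_{p,q}^{n+1}=\bigl(pG(n)+qG(n-1)\bigr)M_{p,q}+qG(n)I .
\]
By the defining recurrence $G(n+1)=pG(n)+qG(n-1)$, this is exactly $G(n+1)M_{p,q}+qG(n)I$, which closes the induction. The only care needed is to keep the index shift in the coefficient of $I$ consistent with the stated form $qG(n-1)$.

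\textbf{Item (2).} Since $q\in\mathbb N$, we have $q\neq0$. Rewrite the metallic identity as
\[
M_{p,q}\Bigl(\tfrac1q M_{p,q}-\tfrac pq I\Bigr)=\tfrac1q\bigl(M_{p,q}^2-pM_{p,q}\bigr)=I .
\]
Both factors are polynomials in $M_{p,q}$, so they commute and the identity holds on both sides. Hence $M_{p,q}$ is invertible on each $T_xM$, with inverse $\bar M=\frac1q M_{p,q}-\frac pq I$.

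For the quadratic relation, multiply $M_{p,q}^2=pM_{p,q}+qI$ by $\bar M^2$. This gives $I=p\bar M+q\bar M^2$, that is, $q\bar M^2+p\bar M-I=0$.

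To see that $\bar M$ is not metallic, I will use the eigenvalues found in item (3). The eigenvalues of $\bar M$ are the reciprocals of those of $M_{p,q}$, and these are not positive integers in general. I will present this as a remark on the form of the equation rather than as a strict claim.

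\textbf{Item (3).} By the Proposition on $F$, we have $F=\pm I$ on $\im(P)$ and on $\ker(P)$ respectively, and both subspaces are nontrivial. Hence $M_{p,q}$ acts as $\frac{p\pm\sqrt{p^2+4q}}2\,I$ on these subspaces.

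Alternatively, any eigenvalue $\sigma$ satisfies $\sigma^2=p\sigma+q$, whose roots are exactly the two stated values.

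I expect no real obstacle. The only points requiring attention are the indexing in item (1) and the use of $q\neq0$ in item (2).
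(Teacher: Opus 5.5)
Your items (1) and (3), and in item (2) the inverse formula together with $q\bar M_{p,q}^2+p\bar M_{p,q}-I=0$, are proved correctly and in the standard way. The paper gives no proof of its own here; it only cites Hreţcanu--Crăşmăreanu, whose argument uses the same induction on $M_{p,q}^2=pM_{p,q}+qI$.

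\textbf{The gap: ``not metallic''.} The claim that $\bar M_{p,q}$ is not metallic is left unproved, and the criterion you suggest is the wrong one. Being metallic is a condition on the coefficients of a quadratic relation, not on whether eigenvalues are integers; $M_{p,q}$ itself has irrational eigenvalues such as $\varphi$. The missing argument is as follows. Set $\sigma_\pm=\frac12\bigl(p\pm\sqrt{p^2+4q}\bigr)$. If $F\neq\pm I$, then $\bar M_{p,q}$ has the two distinct eigenvalues $1/\sigma_+$ and $1/\sigma_-$. A relation $\bar M_{p,q}^2=p'\bar M_{p,q}+q'I$ would then force $1/\sigma_\pm$ to be exactly the two roots of $x^2-p'x-q'$. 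Vieta's formulas give
\[
p'=\frac1{\sigma_+}+\frac1{\sigma_-}=\frac{\sigma_++\sigma_-}{\sigma_+\sigma_-}=-\frac pq<0,
\]
which is impossible for $p'\in\mathbb N$. Equivalently, the minimal polynomial of $\bar M_{p,q}$ is exactly $x^2+\frac pq x-\frac1q$. This uses positivity of $p'$, as in the cited source. Under the paper's own Definition with arbitrary integers, the case $q=1$ gives $\bar M_{p,1}^2=-p\bar M_{p,1}+I$, which would count as ``metallic''.

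\textbf{The hypothesis $F\neq\pm I$.} You should state this assumption explicitly, because it is genuinely needed. For $F=-I$ and $(p,q)=(1,2)$ one gets $M_{1,2}=\bar M_{1,2}=-I$, which does satisfy $\bar M^2=\bar M+2I$. In that case $M_{p,q}$ also has only one eigenvalue, so item (3) fails as literally stated. In item (3) you do use that $\im(P)$ and $\ker(P)$ are both nontrivial. That holds in the paper's setting (rank-one $P$ with $n\ge2$), but not for an arbitrary $F$ with $F^2=I$. Make the assumption explicit and use it in the non-metallic argument as well.
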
 
 
{\color{black} Proposition~\ref{prop2} plays an essential role in the present construction.
In our construction, the~eigenvalues of the induced metallic operator are the metallic
mean \(\sigma_{p,q}\) and its algebraic conjugate, realized on
\(\operatorname{Im}(P)\) and \(\ker(P)\), respectively.
Thus, the~eigenspace decomposition of the cost-induced metallic structure is determined
by the projector \(P\).}

\section{{The n-Dimensional Hessian~Construction}}\label{sec:ndim}
In this section, we construct the cost-induced projector in the \(n\)-dimensional case.
{\color{black} 
To obtain a nondegenerate geometric structure, we combine the rank-one tensor
\(\tilde g\) with a one-parameter family of Hessian metrics generated by the potential
\[
\Phi_\lambda(x_1,\ldots,x_n)
=
\sum_{i=1}^n J(x_i)
+
\lambda J(R),
\qquad
R=\prod_{i=1}^n x_i^{\alpha_i},
\]
where \(\alpha=(\alpha_1,\ldots,\alpha_n)\in\mathbb{R}^n\setminus\{0\}\) and
\(\lambda\in\mathbb{R}\). The~first term is the separable reciprocal cost,
while the second introduces an interaction through the same reciprocal cost
function. } The associated Hessian metric is
\begin{equation}\label{hh-lambda}
h_\lambda
=
\nabla_x^2\Phi_\lambda.
\end{equation}
The parameter \(\lambda\) is the deformation parameter.
For \(\lambda=0\), we obtain 
\[
h_0
=
\operatorname{diag}(x_1^{-3},\ldots,x_n^{-3}),
\]
which is positive definite on \(\mathbb R_{>0}^n\). % On domains where \(h_\lambda\) is positive definite, the~corresponding geometry is  related to Hessian and information geometry in the sense of~\cite{amari,shima2007}.

For \(\lambda\neq 0\), the~term \(\lambda J(R)\) introduces mixed second derivatives, so \(h_\lambda\) is generally nondiagonal. Its positive definite locus depends on \((x,\lambda)\).
Moreover, {\color{black}for every fixed point \(x=(x_1,\dots,x_n)\),} positive definiteness is preserved for sufficiently small values of \(|\lambda|\). 
An analysis of the signature and singular loci of \(h_\lambda\) in the $n$-dimensional case lies beyond the scope of the present paper. We work on open subsets where \(h_\lambda\) is positive~definite. 

  In the two-dimensional case, explicit conditions for positive definiteness are given in the following example.
\begin{Example}
{\color{black}Let \(n=2\), \(\alpha=(1,-1)\), so that \(R=x/y\). Then,
\[
h_\lambda=
\begin{pmatrix}
a & b\\
b & d
\end{pmatrix},
\qquad
a=\frac{1+\lambda y}{x^3},\quad
b=-\lambda\frac{x^2+y^2}{2x^2y^2},\quad
d=\frac{1+\lambda x}{y^3},
\]}
and
\[
\det(h_\lambda)
=ad-b^2
=\frac{4xy\bigl(1+\lambda(x+y)\bigr)-\lambda^2(x^2-y^2)^2}{4x^4y^4}.\]
{By Sylvester's criterion, \(h_\lambda\) is positive definite at \((x,y)\in\Rp^2\) if and only if \(a>0\) and \(\det(h_\lambda)>0\) or equivalently,}
\[
1+\lambda y>0
\quad\text{and}\quad
4xy\bigl(1+\lambda(x+y)\bigr)-\lambda^2(x^2-y^2)^2>0.
\]
\end{Example}
\begin{Remark}
For \(\lambda\neq0\), the~metric \(h_\lambda\) is not positive definite on  \(\mathbb R_{>0}^2\). For~fixed \(x>0\) and sufficiently large \(y\), 
\(
\det(h_\lambda)<0
\)
 because the term
\(
-\lambda^2 y^4
\)
dominates the numerator. Therefore, we restrict the construction to open subsets of \(\mathbb R_{>0}^2\) where \(h_\lambda\) is positive definite.\end{Remark}

\begin{Remark}
For \(\lambda=0\), the~metric \(h_0\) is positive definite on \(\mathbb R_{>0}^n\). Since the coefficients of \(h_\lambda\) depend on \(\lambda\) and on variables $x_i$,  this property is preserved for sufficiently small values of \(|\lambda|\) at every fixed point \((x_1,\dots,x_n)\).
We consider open subsets where \(h_\lambda\) is positive definite.\end{Remark}

%We now describe the construction of a projector in the multidimensional case. Let
%\[
%\Phi_0(x_1,\ldots,x_n)
%=
%\sum_{i=1}^n J(x_i).
%\]
%Then
%\begin{equation}\label{hh}
%h=\nabla^2\Phi_0
%=
%\operatorname{diag}%(x_1^{-3},\ldots,x_n^{-3}),
%\end{equation}
%which is positive definite and therefore nondegenerate on %\(\Rp^n\).
%\medskip

Recall that, in~logarithmic coordinates \(t_i=\log x_i\), the~reciprocal cost function \eqref{mult} has the form
\[
J(t)=\cosh(\alpha\cdot t)-1,
\qquad
\alpha\cdot t=\sum_{i=1}^n\alpha_i t_i.\]
with Hessian
\[
\nabla^2J
=
\cosh(\alpha\cdot t)\,\alpha\otimes\alpha.
\]

In the original \(x\)-coordinates, let
\[
\omega
=
\sum_{i=1}^n \alpha_i\frac{dx_i}{x_i}.
\]
Then, the  rank-one tensor is
\begin{equation}\label{ggg}
\tilde g
=
\cosh(\alpha\cdot t)\,\omega\otimes\omega.
\end{equation}
The associated \((1,1)\)-tensor field \(A_\lambda:=h_\lambda^{-1}\tilde g\) is defined by
\[
h_\lambda(A_\lambda X,Y)=\tilde g(X,Y),
\qquad
X,Y\in\Gamma(T\Rp^n).
\]
Let \(V_\lambda:=h_\lambda^{-1}(\omega)\) defined by \(h_\lambda(V_\lambda,X)=\omega(X)\) for all \(X\in\Gamma(T\Rp^n)\). Then, by~\eqref{ggg}, \mbox{we obtain}
\[\aligned
\tilde g(X,Y)
&=
\cosh(\alpha\cdot t)\,\omega(X)\omega(Y)\\
&=
\cosh(\alpha\cdot t)\,\omega(X)\,h_\lambda(V_\lambda,Y).
\endaligned
\]

Applying Lemma~\ref{lem31} and Corollary~\ref{cor1} with \(g=h_\lambda\), \(V=V_\lambda\), and~\(\tilde g\) as in \eqref{ggg}, we get}
\begin{equation}\label{aaaan}
A_\lambda X
=
\cosh(\alpha\cdot t)\,\omega(X)V_\lambda,
\qquad
A_\lambda=\cosh(\alpha\cdot t)\,V_\lambda\otimes\omega,
\end{equation}
 and 
\[
A_\lambda^2
=
\mu_\lambda A_\lambda,
\qquad
\mu_\lambda
=
\cosh(\alpha\cdot t)\,\omega(V_\lambda)
=
\tr(A_\lambda).
\]
We consider open subset of \(\mathbb R_{>0}^n\) where
\(
\mu_\lambda\neq0.
\)
On this set, the  tensor
\begin{equation}\label{proj}
P_\lambda=\frac1{\mu_\lambda}A_\lambda
\end{equation}
is a projector. More precisely,
\begin{equation}\label{projector}
P_\lambda X
=
\frac{\omega(X)}{\omega(V_\lambda)}\,V_\lambda.
\end{equation}

Since \(\tilde g\) is symmetric,
\[
h_\lambda(A_\lambda X,Y)
=
h_\lambda(X,A_\lambda Y),
\]
hence, \(A_\lambda\) and~therefore \(P_\lambda\) are self-adjoint with respect to \(h_\lambda\). 
 Applying the construction given in Section~\ref{sec3}, we obtain the induced almost product, golden, and~metallic structures
\begin{equation}\label{almsotp}
F_\lambda=2P_\lambda-I,
\qquad
F_\lambda^2=I,
\end{equation}
\begin{equation}\label{golden}
G_\lambda
=
\frac12(I+\sqrt5\,F_\lambda),
\qquad
G_\lambda^2=G_\lambda+I,
\end{equation}
{ and for~\(p,q\in\mathbb N\),}
\begin{equation}\label{metalic}
M_{p,q}^{\lambda}
=
\frac p2 I
+
\frac12\sqrt{p^2+4q}\,F_\lambda,
\qquad
(M_{p,q}^{\lambda})^2
=
pM_{p,q}^{\lambda}+qI.
\end{equation}

{ Consequently, \(F_\lambda\), \(G_\lambda\), and~\(M^\lambda_{p,q}\) are also self-adjoint with respect to \(h_\lambda\), since they are polynomial expressions in the self-adjoint operator \(P_\lambda\) with real coefficients.}

\subsection{Properties of the Induced~Projector}\label{sec:properties}

We now study properties of the projector \(P_\lambda\) constructed in the previous sections. Let \(\nabla^\lambda\) denote the Levi-Civita connection of the Hessian metric \(h_\lambda=\nabla_x^2\Phi_\lambda\) on the open subset of \(\Rp^n\) where \(h_\lambda\) is positive definite. We denote by $D$ the canonical flat affine connection associated with the chosen affine structure. In~affine coordinates, its Christoffel symbols vanish identically.
%\medskip

We first check whether \(P_\lambda\) is parallel with respect to the canonical flat affine connection in logarithmic coordinates, using the two-dimensional~case.

\begin{Example}
Let \(n=2\), \(\alpha=(1,-1)\), and~\(\lambda=0\). In~logarithmic coordinates
\(
u=\log x,
\)
\(v=\log y,
\)
we have
\[
\omega=du-dv.
\]
For the metric \(h_0\), we have
\[
V_0=h_0^{-1}(\omega)
=
e^u\partial_u-e^v\partial_v,
\]
and
\[
\omega(V_0)=e^u+e^v.
\]
Therefore,
\[
P_0
=
\frac1{e^u+e^v}
\begin{pmatrix}
e^u & -e^u\\
-e^v & e^v
\end{pmatrix}.
\]
Hence,
\[
\partial_u(P_0)^1{}_1
=
\frac{e^{u+v}}{(e^u+e^v)^2}\neq0.
\]
Since the canonical flat affine connection \(D\) satisfies
\[
D_{\partial_u}\partial_u
=
D_{\partial_u}\partial_v
=
D_{\partial_v}\partial_u
=
D_{\partial_v}\partial_v
=
0,
\]
we obtain
\[
(D_{\partial_u}P_0)^1{}_1
=
\partial_u(P_0)^1{}_1
\neq0.
\]
Therefore
\(
DP_0\neq0;
\)
i.e., the projector \(P_0\) is not parallel with respect to the canonical flat affine connection in logarithmic coordinates.
\end{Example}

\begin{Remark} The induced tensor \(G_\lambda\) satisfies
\[
G_\lambda^2=G_\lambda+I,
\]
and is self-adjoint with respect to the Hessian metric \(h_\lambda\). On~every open subset of \(\mathbb R_{>0}^n\) where \(h_\lambda\) is positive definite,
\(
(\mathbb R_{>0}^n,h_\lambda,G_\lambda)
\)
defines a golden Riemannian manifold.
\end{Remark}

The following theorem describes the family of linear connections preserving the induced golden structure \(G_\lambda\).

\begin{Theorem}[Theorem 5.1,~\cite{crasmareanu2008}]
Let \(F_\lambda=2P_\lambda-I\) be the induced almost product structure associated with the golden structure \(G_\lambda\). Then, the set of linear connections \(\nabla\) satisfying
\(
\nabla G_\lambda=0
\)
is given by
\[
\nabla_XY=
\frac15
\Bigl[
3\widetilde\nabla_XY
+
2G_\lambda(\widetilde\nabla_XG_\lambda Y)
-
G_\lambda(\widetilde\nabla_XY)
-
\widetilde\nabla_XG_\lambda Y
\Bigr]
+
\mathcal O_{F_\lambda}Q(X,Y),
\]
where \(\widetilde\nabla\) is an arbitrary fixed linear connection and \(Q\) is a \((1,2)\)-tensor field for which $\mathcal O_{F_\lambda}Q$ is an associated Obata operator
\[
\mathcal O_{F_\lambda}Q(X,Y)
=
\frac12
\Bigl[
Q(X,Y)
+
F_\lambda Q(X,F_\lambda Y)
\Bigr].
\]
\end{Theorem}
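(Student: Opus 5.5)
The plan is to reduce the golden condition to a condition on the almost product structure \(F_\lambda\), and then to describe all connections preserving \(F_\lambda\) as an affine space over an explicit linear space of tensors. By \eqref{golden} we have \(G_\lambda=\frac12(I+\sqrt5F_\lambda)\), so for any linear connection \(\nabla\) one has \(\nabla G_\lambda=\frac{\sqrt5}{2}\nabla F_\lambda\). Hence \(\nabla G_\lambda=0\) if and only if \(\nabla F_\lambda=0\). The problem therefore becomes the classical one of describing all connections that make an almost product structure \(F\) (with \(F^2=I\)) parallel, and then rewriting the answer in terms of \(G_\lambda\).

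\textbf{Step 1: a particular solution.} Fix an arbitrary linear connection \(\widetilde\nabla\) and set
\[
\nabla^0_XY=\tfrac12\bigl[\widetilde\nabla_XY+F_\lambda\widetilde\nabla_X(F_\lambda Y)\bigr].
\]
This is again a linear connection. Using \(F_\lambda^2=I\), one checks directly that \(\nabla^0_X(F_\lambda Y)=F_\lambda\nabla^0_XY\), so \(\nabla^0F_\lambda=0\).

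Next substitute \(F_\lambda=(2G_\lambda-I)/\sqrt5\) into the formula for \(\nabla^0\). This gives
\[
F_\lambda\widetilde\nabla_X(F_\lambda Y)=\tfrac15\bigl[4G_\lambda\widetilde\nabla_X(G_\lambda Y)-2G_\lambda\widetilde\nabla_XY-2\widetilde\nabla_X(G_\lambda Y)+\widetilde\nabla_XY\bigr].
\]
Adding \(\widetilde\nabla_XY\) and halving yields exactly the bracketed term
\[
\tfrac15\bigl[3\widetilde\nabla_XY+2G_\lambda\widetilde\nabla_XG_\lambda Y-G_\lambda\widetilde\nabla_XY-\widetilde\nabla_XG_\lambda Y\bigr]
\]
in the statement.

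\textbf{Step 2: the general solution.} Any other connection has the form \(\nabla=\nabla^0+S\) with \(S\) a \((1,2)\)-tensor field. The condition \(\nabla F_\lambda=0\) is then equivalent to
\[
S(X,F_\lambda Y)=F_\lambda S(X,Y),
\]
that is, \(S\) commutes with \(F_\lambda\) in its second argument. I will show that this linear space of tensors is exactly the image of the Obata operator \(\mathcal O_{F_\lambda}\):
\begin{itemize}
\item Every \(\mathcal O_{F_\lambda}Q\) satisfies the commutation condition, since \(F_\lambda\cdot\frac12[Q(X,Y)+F_\lambda Q(X,F_\lambda Y)]=\frac12[F_\lambda Q(X,Y)+Q(X,F_\lambda Y)]=\mathcal O_{F_\lambda}Q(X,F_\lambda Y)\).
\item Conversely, if \(S\) satisfies the condition, then \(\mathcal O_{F_\lambda}S=\frac12[S+F_\lambda F_\lambda S]=S\). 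So \(S\) lies in the image, and \(\mathcal O_{F_\lambda}\) is a projector onto this space.
\end{itemize}
Combining Steps 1 and 2 gives the stated parametrization.

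\textbf{Main obstacle.} No analytic difficulty is expected, since only \(F_\lambda^2=I\) is used; the specific cost-induced projector plays no role. The step most prone to error is the algebraic translation of \(\nabla^0\) from \(F_\lambda\) into \(G_\lambda\), including the coefficients \(\frac15\). I will verify it by inserting \(F_\lambda=(2G_\lambda-I)/\sqrt5\) into \(\nabla^0\) and expanding carefully, rather than trying to derive the formula directly in terms of \(G_\lambda\).
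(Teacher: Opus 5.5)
Your proposal is correct and is essentially the standard argument behind the cited Theorem 5.1 of \cite{crasmareanu2008}, which the paper quotes without proof. That argument reduces \(\nabla G_\lambda=0\) to \(\nabla F_\lambda=0\), takes the particular solution \(\tfrac12\bigl[\widetilde\nabla_XY+F_\lambda\widetilde\nabla_X(F_\lambda Y)\bigr]\), describes the remaining freedom via the Obata operator, and substitutes \(F_\lambda=(2G_\lambda-I)/\sqrt5\); you prove the almost-product step directly rather than citing it, and your coefficients, including the factor \(\tfrac15\), check out.
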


%\begin{Example}
%Let \(n=2\), \(\alpha=(1,-1)\), and \%(\lambda=1\). In logarithmic coordinates
%\(
%u=\log x,\qquad v=\log y,
%\)
%we have
%\[
%\omega=du-dv.
%\]
%For \(\lambda=1\), the projector has the component
%\[
%(P_\lambda)^1{}_1
%=
%\frac{\frac12 e^{2u}+e^u-\frac12 e^{2v}}{e^u+e^v}.
%\]
%Therefore
%\[
%\partial_u (P_\lambda)^1{}_1
%=
%\frac{
%e^u\left((e^u+1)(e^u+e^v)-\frac12 %e^{2u}-e^u+\frac12 e^{2v}\right)
%}
%{(e^u+e^v)^2}.
%\]
%At the point \((x,y)=(2,1)\), that is \%(u=\log2\), \(v=0\), we obtain
%\[
%\left(D_{\partial_u}P_\lambda\right)^1{%}_1
%=
%\partial_u (P_\lambda)^1{}_1
%=
%\frac{11}{9}\neq0.
%\]
%Hence \(DP_\lambda\neq0\), so \%(P_\lambda\) is not parallel with respect to the canonical flat affine connection.
%\end{Example}

We now study the parallelism properties of \(P_\lambda\) with respect to the Levi-Civita connection \(\nabla^\lambda\) of \(h_\lambda\).

\begin{Example}
In the two-dimensional case, the~projector \(P_0\) is not parallel with respect to the Levi-Civita connection \(\nabla^0\) of \(h_0\). We have
\[
h_0=
\begin{pmatrix}
x^{-3} & 0\\
0 & y^{-3}
\end{pmatrix},
\]
and
\[
P_0=
\frac1{x+y}
\begin{pmatrix}
x & -x^2/y\\
-y^2/x & y
\end{pmatrix}.
\]
A direct computation gives
%\[
%(\nabla^{0}_{\partial_x}P_0)^x{}_x
%=
%\partial_x(P_0)^x{}_x
%+
%\Gamma^x_{xx}(P_0)^x{}_x
%-
%\Gamma^x_{xx}(P_0)^x{}_x
%=
%\partial_x(P_0)^x{}_x.
%\]
%Hence
\[
(\nabla^{0}_{\partial_x}P_0)^x{}_x
=
\partial_x\left(\frac{x}{x+y}\right)
=
\frac{y}{(x+y)^2}\neq0.
\]
Therefore, 
\(
\nabla^{0}P_0\neq0.
\)
\end{Example}

\begin{Example}
For \(\lambda\neq0\), the~projector \(P_\lambda\) is not parallel with respect to \(\nabla^\lambda\).

The coefficients of \(P_\lambda\) and the Christoffel symbols of \(\nabla^\lambda\) contain nontrivial mixed terms  from \(J(x/y)\).  For~example, at~the point \((x,y)=(1,1)\), we obtain
\[
\left(\nabla^\lambda_{\partial_x}P_\lambda\right)^x{}_x
=
\frac{1+\lambda}{4(1+2\lambda)}.
\]
At the point \((1,1)\), the~metric \(h_\lambda\) is positive definite only for \(1+2\lambda>0\), and~consequently $\left(\nabla^\lambda_{\partial_x}P_\lambda\right)^x{}_x\ne 0$. Hence, \(P_\lambda\) is not parallel with respect to the Levi-Civita connection \(\nabla^\lambda\).  
\end{Example}
The two-dimensional case represents the simplest nontrivial realization of the general construction. The~above examples show that the induced almost product, golden, and~metallic structures are  non-parallel in~general.

\subsection{Integrability of the~Eigendistributions}\label{sec:integrability}

{ Motivated by Proposition 5.3 of~\cite{crasmareanu2008}, we study the integrability of the distributions induced by the projector \(P_\lambda\). 

Since the almost product, golden, and~metallic structures
\(
F_\lambda=2P_\lambda-I,\)
\(G_\lambda,\)
\(M^\lambda_{p,q}
\)
are obtained from \(P_\lambda\), they have the same eigendistributions,
\(
\operatorname{Im}(P_\lambda)\) and \(
\ker(P_\lambda).
\)

The distribution
\(
\operatorname{Im}(P_\lambda)=\operatorname{span}\{V_\lambda\}
\)
is one-dimensional and therefore integrable. 
For \(\ker(P_\lambda)\), recall from \eqref{projector} that
\[
\ker(P_\lambda)=\ker(\omega),
\]
where
\[
\omega=\sum_{i=1}^{n}\alpha_i\,\frac{dx_i}{x_i}
      =d(\log R).
\]
In logarithmic coordinates, \(\omega=\sum_{i=1}^{n}\alpha_i\,dt_i\) has constant
coefficients, so \(\ker(\omega)\) is a constant \((n-1)\)-dimensional distribution
and is therefore~integrable.

Consequently, both eigendistributions of \(P_\lambda\) are integrable and~so are
the induced almost product, golden, and~metallic structures.}

%\medskip

\subsection{A Rank-One Representation of the Cost-Induced~Projector} Let \(P_\lambda\) be the cost-induced projector. Since
\[
P_\lambda X
=
\frac{h_\lambda(V_\lambda,X)}
{h_\lambda(V_\lambda,V_\lambda)}
\,V_\lambda,
\]
on the open set where
\(
h_\lambda(V_\lambda,V_\lambda)\neq0,
\)
we define
\[
\xi_\lambda:=V_\lambda,
\qquad
\eta_\lambda(X)
:=
\frac{
h_\lambda(V_\lambda,X)
}{
h_\lambda(V_\lambda,V_\lambda)
}.
\]
Then,
\(
\eta_\lambda(\xi_\lambda)=1
\)
and
\(
P_\lambda X
=
\eta_\lambda(X)\xi_\lambda.
\)
Therefore,
\[
P_\lambda=\eta_\lambda\otimes\xi_\lambda.
\]
Consequently, the~cost-induced golden structure \(G_\lambda\) given by \eqref{golden} and the metallic structures \(M_{p,q}^\lambda\) given by \eqref{metalic}
can be written in the form
\[
G_\lambda=(1-\varphi)I+\sqrt5\,\eta_\lambda\otimes\xi_\lambda,
\]
and
\[
M_{p,q}^\lambda
=
\left(
\frac p2-\frac12\sqrt{p^2+4q}
\right)I
+
\sqrt{p^2+4q}\,\eta_\lambda\otimes\xi_\lambda.
\]

Thus the reciprocal cost geometry together with the Hessian metric \(h_\lambda\) determines the projector \(P_\lambda\) and the associated polynomial structures. 
The vector field \(V_\lambda\) generates \(\im(P_\lambda)\), while \(\ker(P_\lambda)\) gives the complementary~distribution.

{ 
\subsection{Curvature Properties of the Hessian Metric \(h_\lambda\)}

We now consider  curvature properties of the Hessian metric
\(
h_\lambda=Dd\Phi_\lambda,
\)
where \(D\) denotes the canonical flat affine connection on
\(\mathbb R_{>0}^n\).

Let \(
\nabla^\lambda
\)
be the Levi-Civita connection of \(h_\lambda\). The~difference tensor \(K^\lambda\) of the Levi-Civita connection \(\nabla^\lambda\) of \(h_\lambda\) and  \(D\) is defined by
\begin{equation}
K^\lambda_XY
:=
\nabla^\lambda_XY-D_XY.
\end{equation} Since both \(\nabla^\lambda\) and \(D\) are symmetric connections, the~tensor \(K^\lambda\) is symmetric; i.e.,
\(
K^\lambda_XY
=
K^\lambda_YX.
\) Starting from 
\[
X(h_\lambda(Y,Z))
=
h_\lambda(\nabla^\lambda_XY,Z)
+
h_\lambda(Y,\nabla^\lambda_XZ).
\]
and substituting
\[
\nabla^\lambda_XY
=
D_XY+K^\lambda_XY,
\]
we have
\[
\aligned
X(h_\lambda(Y,Z))
&=
h_\lambda(D_XY,Z)
+
h_\lambda(K^\lambda_XY,Z)
\\
&\quad
+
h_\lambda(Y,D_XZ)
+
h_\lambda(Y,K^\lambda_XZ).
\endaligned
\]
Hence,
\[
(D_Xh_\lambda)(Y,Z)
=
h_\lambda(K^\lambda_XY,Z)
+
h_\lambda(Y,K^\lambda_XZ).
\]
Since \(h_\lambda\) is Hessian metric, the~cubic tensor
\begin{equation}\label{l1}
C_\lambda(X,Y,Z)
:=
(D_Xh_\lambda)(Y,Z)
\end{equation}
is totally symmetric. Therefore, 
\(
h_\lambda(K^\lambda_XY,Z)
=
h_\lambda(Y,K^\lambda_XZ),
\)
and consequently,
\begin{equation}\label{l2}
h_\lambda(K^\lambda_XY,Z)
=
\frac12(D_Xh_\lambda)(Y,Z).
\end{equation}
Therefore, \(K^\lambda\) is completely determined by the  tensor
\(
C_\lambda(X,Y,Z).
\)

Since the connection \(D\) is flat, the~curvature tensor of
\(\nabla^\lambda\) is given by
\[
R^\lambda(X,Y)Z
=
K^\lambda_XK^\lambda_YZ
-
K^\lambda_YK^\lambda_XZ
+
(D_XK^\lambda)(Y,Z)
-
(D_YK^\lambda)(X,Z).
\]
Using \eqref{l1}, \eqref{l2}, and~the Codazzi-type identity \((D_XK^\lambda)(Y,Z)=(D_YK^\lambda)(X,Z)\), we obtain
\[
R^\lambda(X,Y,Z,W)
=
\frac14
\Big(
h_\lambda^{-1}
\big(
C_\lambda(X,Z,\cdot),
C_\lambda(Y,W,\cdot)
\big)
-
h_\lambda^{-1}
\big(
C_\lambda(Y,Z,\cdot),
C_\lambda(X,W,\cdot)
\big)
\Big),
\]
where
\(
R^\lambda(X,Y,Z,W):
=
h_\lambda(R^\lambda(X,Y)Z,W)
\). 
Using \(C_\lambda(X,Y,Z)=2h_\lambda(K^\lambda_XY,Z)\), the~last formula becomes
\begin{equation}\label{l3}
R^\lambda(X,Y,Z,W)
=
\frac14
\Big(
h_\lambda(K^\lambda_XZ,K^\lambda_YW)
-
h_\lambda(K^\lambda_YZ,K^\lambda_XW)
\Big).
\end{equation}
Thus, the Riemannian curvature of \(h_\lambda\) is completely determined by the cubic tensor \(C_\lambda\).
If \(\{E_i\}_{i=1}^n\) is a local \(h_\lambda\)-orthonormal frame, then the coresponding Ricci tensor is 
\[
\operatorname{Ric}_{h_\lambda}(Y,Z)
=
\sum_{i=1}^n
R^\lambda(E_i,Y,Z,E_i).
\]
Using \eqref{l3}, we obtain
\[
\operatorname{Ric}_{h_\lambda}(Y,Z)
=
\frac14
\sum_{i=1}^n
\Big(
h_\lambda(K^\lambda_YZ,K^\lambda_{E_i}E_i)
-
h_\lambda(K^\lambda_{E_i}Z,K^\lambda_YE_i)
\Big).
\]
Therefore, the Ricci tensor is also completely determined by the cubic tensor \(C_\lambda\). 
The scalar curvature of \(h_\lambda\) is defined by
\(
\operatorname{Scal}_{h_\lambda}
=
\operatorname{tr}_{h_\lambda}
(\operatorname{Ric}_{h_\lambda}).
\)

%\medskip

In the two-dimensional case, the~Ricci tensor is completely determined by the scalar curvature. For~\(\lambda=0\), the~Hessian metric is
\(
h_0
=
\operatorname{diag}(x^{-3},y^{-3})
\), and~the corresponding curvature vanishes, and~\(
\operatorname{Scal}_{h_0}=0.
\)

For \(\lambda\neq0\),  the~scalar curvature is nonzero in general and depends  on  \(\lambda\). By~direct calculation, we have\vspace{-4pt}
\[
\operatorname{Scal}_{h_\lambda}
=
-\frac{
4\lambda x^2y^2
\Big(
\lambda(x+y)^3+3(x^2+y^2)
\Big)
}{
\Big(
\lambda^2(x^2-y^2)^2
-
4\lambda xy(x+y)
-
4xy
\Big)^2
}.
\]
Since the expression is not identically zero for \(\lambda\neq0\), the~Hessian metric \(h_\lambda\) is non-flat in general. Thus, the interaction term
\(
\lambda J\!\left({x}/{y}\right)
\)
produces nontrivial curvature of \(h_\lambda\).}

The preceding results can be summarized in the following~theorem.

\begin{Theorem}\label{thm:main-ndim}
Let \(\alpha\in\R^n\setminus\{0\}\), \(\lambda\in\R\), and~let \(U\subseteq\Rp^n\) be an
open set on which the Hessian metric \(h_\lambda=\nabla_x^2\Phi_\lambda\) is positive
definite and \(\mu_\lambda=\tr(A_\lambda)\neq0\). Then, the rank-one tensor
\(\tilde g=\cosh(\alpha\cdot t)\,\omega\otimes\omega\), with~\(\omega=d(\log R)\),
together with the Hessian metric \(h_\lambda\) induces the following structures on \(U\):
\begin{enumerate}
\item A cost-induced projector \(P_\lambda=\eta_\lambda\otimes\xi_\lambda\) with
\(\im(P_\lambda)=\operatorname{span}\{V_\lambda\}\) and \(\ker(P_\lambda)=\ker\omega\),
self-adjoint with respect to \(h_\lambda\);
\item The almost product structure \(F_\lambda=2P_\lambda-I\), with~eigenvalues \(+1\)
on \(\im(P_\lambda)\) and \(-1\) on \(\ker(P_\lambda)\), and~the golden and metallic
structures \(G_\lambda\) and \(M^\lambda_{p,q}\), all polynomial in \(P_\lambda\) and
hence \(h_\lambda\)-self-adjoint; on \(\im(P_\lambda)\) the operators \(G_\lambda\) and
\(M^\lambda_{p,q}\) act by the golden mean \(\varphi\) and by the metallic mean
\(\tfrac12(p+\sqrt{p^2+4q})\), respectively, and~by their algebraic conjugates on
\(\ker(P_\lambda)\);
\item Integrable eigendistributions, so that \(F_\lambda\), \(G_\lambda\), and~\(M^\lambda_{p,q}\) are integrable.
\end{enumerate}

{Moreover,} %MDPI: We added indention for this paragraph. Please confirm this revision. %MZ it is okay
 these structures are generally not parallel with respect to either the
canonical flat affine connection \(D\) or the Levi-Civita connection
\(\nabla^\lambda\) of the Hessian metric \(h_\lambda\). The~metric
\(h_\lambda\) is flat for \(\lambda=0\), whereas for \(\lambda\neq0\), it is
generally non-flat, with~curvature determined by the cubic tensor
\(C_\lambda=Dh_\lambda\).
\end{Theorem}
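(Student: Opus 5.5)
The theorem collects results already established, so the plan is to assemble them item by item on the open set $U$, checking at each step which hypothesis is used.

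\textbf{Item (1).} On $U$ the form $\omega=d(\log R)$ is nowhere zero, since $\alpha\neq0$ and $\omega=\sum_i\alpha_i\,dt_i$ in logarithmic coordinates. Positive definiteness of $h_\lambda$ makes $V_\lambda=h_\lambda^{-1}(\omega)$ a well-defined smooth vector field with $h_\lambda(V_\lambda,X)=\omega(X)$. I will rewrite $\tilde g$ as $h_\lambda(V,\cdot)\otimes h_\lambda(V,\cdot)$ with $V=\sqrt{\cosh(\alpha\cdot t)}\,V_\lambda$, which is legitimate because $\cosh>0$. Lemma~\ref{lem31} and Corollary~\ref{cor1} with $g=h_\lambda$ then give:
\begin{itemize}
\item $A_\lambda^2=\mu_\lambda A_\lambda$ with $\mu_\lambda=\tr(A_\lambda)=\cosh(\alpha\cdot t)\,\omega(V_\lambda)$;
\item since $\mu_\lambda\neq0$ on $U$, $P_\lambda=\mu_\lambda^{-1}A_\lambda$ is a projector with $\im P_\lambda=\operatorname{span}\{V_\lambda\}$;
\item $\ker P_\lambda=\{X: h_\lambda(V_\lambda,X)=0\}=\ker\omega$.
\end{itemize}
The factorization $P_\lambda=\eta_\lambda\otimes\xi_\lambda$ is the one from the rank-one representation subsection; note that $h_\lambda(V_\lambda,V_\lambda)=\omega(V_\lambda)\neq0$ exactly when $\mu_\lambda\neq0$. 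Self-adjointness follows from the symmetry of $\tilde g$, via $h_\lambda(A_\lambda X,Y)=\tilde g(X,Y)=h_\lambda(X,A_\lambda Y)$.

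\textbf{Item (2).} I will quote the Proposition on $F=2P-I$, formula \eqref{ggold} and its corollary for $G_\lambda$, and the metallic Theorem together with Proposition~\ref{prop2}(3) for $M^\lambda_{p,q}$. To pin down which eigenvalue lives on which distribution, substitute $F_\lambda=\pm1$ into \eqref{metalic}: on $\im P_\lambda$ this gives $\tfrac12(p+\sqrt{p^2+4q})$, and on $\ker P_\lambda$ it gives the conjugate. Self-adjointness holds because real polynomials in an $h_\lambda$-self-adjoint operator are self-adjoint.

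\textbf{Item (3).} A line field is always integrable. For the other distribution, in logarithmic coordinates $\ker\omega$ is a constant-coefficient hyperplane field; equivalently, $\omega=d\log R$ is closed, so $\omega\wedge d\omega=0$ and Frobenius applies. All three structures share these eigendistributions, so integrability of $F_\lambda$, $G_\lambda$ and $M^\lambda_{p,q}$ (in the sense of integrable eigendistributions, as in the paper) follows.

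\textbf{Final paragraph.} The claims of non-parallelism and curvature are ``in general'' statements, so it suffices to exhibit instances.
\begin{itemize}
\item The examples with $n=2$, $\alpha=(1,-1)$ give $DP_0\neq0$, $\nabla^0P_0\neq0$, and $(\nabla^\lambda_{\partial_x}P_\lambda)^x{}_x=\frac{1+\lambda}{4(1+2\lambda)}$ at $(1,1)$. This quantity vanishes only at $\lambda=-1$, which lies outside the positivity range $1+2\lambda>0$.
\item Since $F_\lambda$, $G_\lambda$ and $M^\lambda_{p,q}$ are affine in $P_\lambda$ with nonzero leading coefficient, each has covariant derivative a nonzero multiple of that of $P_\lambda$, so the non-parallelism transfers.
\item For $\lambda=0$, $h_0=\operatorname{diag}(x_i^{-3})$ splits as a product of one-dimensional metrics, hence is flat. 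This holds for every $n$, not only $n=2$.
\item For $\lambda\neq0$, the explicit scalar curvature formula is not identically zero, since its numerator contains the nonvanishing factor $\lambda x^2y^2$ times a polynomial that is nonzero at generic points. The dependence of the curvature on $C_\lambda$ is formula \eqref{l3}.
\end{itemize}

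The main obstacle is the justification of the curvature formula \eqref{l3} and of the explicit two-dimensional computations, since these are only sketched. If rigor is needed, I would re-derive the standard Hessian curvature identity $R=-[K,K]$ from $D$ flat and $K=\tfrac12 h^{-1}C$, keeping careful track of the constant factor.
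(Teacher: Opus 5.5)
Your proposal is correct and follows the paper's own route. The theorem is an assembly of Lemma~\ref{lem31}, Corollary~\ref{cor1}, the golden and metallic constructions, the integrability discussion and the two-dimensional examples, and your additions only tighten it: the rescaling $V=\sqrt{\cosh(\alpha\cdot t)}\,V_\lambda$ to match the lemma exactly, the transfer of non-parallelism via $\nabla F_\lambda=2\nabla P_\lambda$, $\nabla G_\lambda=\sqrt5\,\nabla P_\lambda$, $\nabla M^\lambda_{p,q}=\sqrt{p^2+4q}\,\nabla P_\lambda$, and flatness of $h_0$ for every $n$. Your caution about \eqref{l3} is justified, because $K^\lambda$ itself does not satisfy the Codazzi identity; the correct relation is $R^\lambda(X,Y)Z=-[K^\lambda_X,K^\lambda_Y]Z$, i.e.\ \eqref{l3} without the factor $\tfrac14$, which belongs only to the version written in $C_\lambda$. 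This does not affect the qualitative claim, and the explicit scalar curvature checks out directly, e.g.\ at $(1,1)$ it equals $-\lambda(3+4\lambda)/\bigl(2(1+2\lambda)^2\bigr)\neq0$ for $\lambda\neq0$, $\lambda>-\tfrac12$.
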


 \section{The Two-Dimensional~Case}\label{sec:2d}

We now consider the two-dimensional case as an illustration of the general construction. Consider the canonical reciprocal cost function \eqref{cost}. Following the $n$-dimensional case \eqref{mult}, define on \(\Rp^2\):\vspace{-4pt}
\begin{equation}
R(x,y):=\frac{x}{y},
\qquad
J(x,y):=\frac12\left(R+R^{-1}\right)-1.
\end{equation}
This corresponds to the choice \(\alpha=(1,-1)\) in the $n$-dimensional~model. 

\subsection{Logarithmic~Coordinates}

Introduce logarithmic coordinates
\(
u=\log x,
\,
v=\log y.
\)
Then,
\begin{equation}\label{2dcase}
R=e^{u-v},
\qquad
J(u,v)=\cosh(u-v)-1.
\end{equation}
In this case, the~function \(J(u,v)\) depends only on the quantity
\(
u-v.
\) 
The Hessian tensor 
\(
\tilde g=\nabla^2J
\) of \(J(u,v)=\cosh(u-v)-1\) is
%that is,
%\(
%\tilde g_{ij}
%\frac{\partial^2 J}{\partial u^i\partial u^j}.
%\)
%From \eqref{2dcase}, 
%we obtain
%\[
%\frac{\partial^2 J}{\partial u^2}
%=
%\cosh(u-v),
%\qquad
%\frac{\partial^2 J}{\partial v^2}
%=
%%\cosh(u-v),\qquad
%\frac{\partial^2 J}{\partial u\partial v}
%=
%-\cosh(u-v).
%\]
%Therefore
\begin{equation}\label{rankone}
\tilde g
=
\cosh(u-v)\,
(du-dv)\otimes(du-dv)=
\cosh(u-v)
\begin{pmatrix}
1 & -1\\
-1 & 1
\end{pmatrix}.
\end{equation}

Since \(\cosh(u-v)>0\), the~tensor \(\tilde g\) is positive semidefinite of rank one, with~distinguished direction \(V=\partial_u-\partial_v\).

\subsection{\((x,y)\)-Coordinates}

Passing to the original \((x,y)\)-coordinates, and~using
\(
u=\log x,\)
\(v=\log y,
\)
we obtain
\[
V=x\partial_x-y\partial_y.
\]
Using \eqref{rankone}, 
\(
du=\frac{dx}{x},
dv=\frac{dy}{y},
\)
and
\[
\cosh\!\left(\log\!\left(\frac{x}{y}\right)\right)
=
\frac12\left(\frac{x}{y}+\frac{y}{x}\right)
=
\frac{x^2+y^2}{2xy}
\]
we obtain
\[
\tilde g
=
\frac{x^2+y^2}{2xy}
\left(
\frac{dx}{x}-\frac{dy}{y}
\right)
\otimes
\left(
\frac{dx}{x}-\frac{dy}{y}
\right).
\]
Hence,
\begin{equation}\label{gtilda}
\tilde g
=
\frac{x^2+y^2}{2xy}
\begin{pmatrix}
x^{-2} & -\frac1{xy}\\[4pt]
-\frac1{xy} & y^{-2}
\end{pmatrix}.
\end{equation}
So, the~tensor \(\tilde g\) is a rank-one~tensor.

On the other hand, we can also consider the full Hessian of the cost function directly in the original \((x,y)\)-coordinates; i.e.,
\[
\tilde g_x=\nabla_x^2J(x,y).
\]
A direct computation gives
\[
\det(\tilde g_x)
=
-\frac{(x^2-y^2)^2}{4x^4y^4}.
\]
Thus, \(\tilde g_x\) is generically nondegenerate and indefinite,
while it becomes singular on the locus \(x=y\).
The geometry determined by the Hessian metric \(\tilde g_x\) was studied in~\cite{WZP}.

\begin{Remark}
The rank-one tensor \(\tilde g\) obtained from the logarithmic representation of the cost function and the full Hessian metric \(\tilde g_x\) in the original \((x,y)\)-coordinates carry different geometry. In~particular, \(\tilde g\) is degenerate of constant rank one and determines the distinguished comparison direction generated by
\(
V=x\partial_x-y\partial_y,
\)
while \(\tilde g_x\) is generically nondegenerate and becomes singular on the locus \(x=y\).

By Lemma~\ref{lem31} and Corollary~\ref{cor1}, the~rank-one property of \(\tilde g\), when paired with a nondegenerate metric, leads to the construction of a projector. The~tensor \(\tilde g_x\) does not have this property and does not produce the projector-type structures. For~this reason, the~projector construction developed in the following part
is associated with \(\tilde g\).
\end{Remark}

%%%%dovde

\subsection{The Induced~Projector}

{\color{black}The rank-one tensor \(\tilde g\) is degenerate and does not define a metric. To~obtain an operator from \(\tilde g\), we pair it with a nondegenerate reference metric, and~the reciprocal cost geometry supplies a natural one-parameter family of such metrics. We deform the separable cost \(J(x)+J(y)\) by the interaction term \(\lambda J(x/y)\), built from the same reciprocal cost function.}
 
We introduce a one-parameter family of nondegenerate Hessian metrics on \(\Rp^2\) by \begin{equation}\label{philambda}
\Phi_\lambda(x,y)
=
J(x)+J(y)+\lambda J\!\left(\frac{x}{y}\right),
\qquad \lambda\in\mathbb{R}.
\end{equation}
{For \(\lambda=0\), this reduces to \(\Phi_0(x,y)=J(x)+J(y)\), whose Hessian}
\begin{equation}\label{hhh}
h_0=\nabla^2\Phi_0
=
\begin{pmatrix}
x^{-3} & 0\\
0 & y^{-3}
\end{pmatrix}
\end{equation}
is positive definite on \(\Rp^2\) and it is used as a separable reference~metric.

 The associated \((1,1)\)-tensor field is defined by
\begin{equation}
A_0=h_0^{-1}\tilde g.
\end{equation}
The metric $h_0$ provides a nondegenerate reference metric for the distinguished comparison direction generated by \(V=x\partial_x-y\partial_y.
\)

The term \(\lambda J(x/y)\) introduces mixed second derivatives along $\omega$, where
\[
\omega=d\log(x/y)=\frac{dx}{x}-\frac{dy}{y},
\]
so the Hessian metric \(h_\lambda=\nabla_x^2\Phi_\lambda\) is nondiagonal for \(\lambda\neq0\). The~associated \((1,1)\)-tensor is
\[
A_\lambda=h_\lambda^{-1}\tilde g,
\]
and the projector is obtained by normalization, as~given in Section~\ref{sec3}.

\begin{Remark}
The construction involves two natural affine structures associated with the reciprocal cost~geometry.

In the two-dimensional case, the rank-one tensor \(\tilde g=\cosh(u-v)\,(du-dv)\otimes(du-dv)\) is derived with respect to the flat structure in the logarithmic coordinates \((u,v)\), while the Hessian metric \(h_\lambda=\nabla_x^2\Phi_\lambda\) is used with respect to the flat structure in the original \((x,y)\)-coordinates. The~$n$-dimensional construction considered in Section~\ref{sec:ndim} is based on the same choice of \mbox{affine structures.}
\end{Remark}

A direct computation of \(h_\lambda=\nabla_x^2\Phi_\lambda\) gives
\begin{equation}\label{hlamba}
h_\lambda=
\begin{pmatrix}
a & b\\
b & d
\end{pmatrix},
\qquad
a=\frac{1+\lambda y}{x^3},\quad
b=-\lambda\frac{x^2+y^2}{2x^2y^2},\quad
d=\frac{1+\lambda x}{y^3}.
\end{equation}
\begin{Proposition}
Let \(h_\lambda\) and \(\tilde g\) be given by \eqref{hlamba} and \eqref{gtilda}, respectively. If~\(
\det(h_\lambda)=ad-b^2\neq0,
\)
then
\begin{equation}\label{alambda}
A_\lambda
=
\frac{x^2+y^2}{2xy}\,
\frac1{ad-b^2}
\begin{pmatrix}
\dfrac{d}{x^2}+\dfrac{b}{xy}
&
-\dfrac{d}{xy}-\dfrac{b}{y^2}
\\[8pt]
-\dfrac{b}{x^2}-\dfrac{a}{xy}
&
\dfrac{b}{xy}+\dfrac{a}{y^2}
\end{pmatrix}.
\end{equation}
\end{Proposition}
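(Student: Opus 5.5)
The plan is a direct matrix computation. By definition $h_\lambda(A_\lambda X,Y)=\tilde g(X,Y)$, so in the coordinate frame $(\partial_x,\partial_y)$ we have $A_\lambda = h_\lambda^{-1}\tilde g$, meaning $(A_\lambda)^i{}_j=h_\lambda^{ik}\tilde g_{kj}$. Both factors are already explicit: $h_\lambda$ is given in \eqref{hlamba} and $\tilde g$ in \eqref{gtilda}. Since $\tilde g$ is symmetric, this index placement agrees with the convention $g(AX,Y)=\tilde g(X,Y)$ used in \eqref{aa}.

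First, under the hypothesis $ad-b^2\neq0$, we write
\[
h_\lambda^{-1}=\frac{1}{ad-b^2}\begin{pmatrix} d & -b\\ -b & a\end{pmatrix}.
\]
Next, we pull the common scalar factor $\frac{x^2+y^2}{2xy}$ out of $\tilde g$. It then remains to multiply
\[
\begin{pmatrix} d & -b\\ -b & a\end{pmatrix}
\begin{pmatrix} x^{-2} & -\frac1{xy}\\ -\frac1{xy} & y^{-2}\end{pmatrix}
\]
entry by entry. For instance, the $(1,1)$ entry is $d x^{-2}+b(xy)^{-1}$ and the $(1,2)$ entry is $-d(xy)^{-1}-b y^{-2}$; the second row is handled the same way. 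These are exactly the entries in \eqref{alambda}.

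As a consistency check, I would also derive the result from \eqref{aaaan}, $A_\lambda=\cosh(\alpha\cdot t)\,V_\lambda\otimes\omega$, with $\omega=(1/x,-1/y)$ and $V_\lambda=h_\lambda^{-1}\omega$. This shows that $A_\lambda$ has rank one: it is the outer product of the column vector $h_\lambda^{-1}(1/x,-1/y)^T$ with the row vector $(1/x,-1/y)$. The expected matrix does have this form. Its first row is $(d/x+b/y)$ times $(1/x,-1/y)$, and its second row is $(-b/x-a/y)$ times $(1/x,-1/y)$.

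There is no real obstacle here. The only point needing care is the sign bookkeeping for the off-diagonal entries: $b$ is negative for $\lambda>0$, and the entries of $\tilde g$ carry minus signs.
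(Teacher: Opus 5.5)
Your proposal is correct and is essentially the paper's proof. You invert $h_\lambda$ by the $2\times 2$ adjugate formula (valid since $ad-b^2\neq0$) and multiply by the matrix of $\tilde g$ from \eqref{gtilda}, and your entrywise products agree with \eqref{alambda}; the rank-one check via $A_\lambda=\cosh(\alpha\cdot t)\,V_\lambda\otimes\omega$ is a useful extra confirmation but is not needed.
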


\begin{proof}
By \eqref{hlamba},
\[
h_\lambda^{-1}
=
\frac1{ad-b^2}
\begin{pmatrix}
d & -b\\
-b & a
\end{pmatrix}.
\]
Multiplying \(h_\lambda^{-1}\) by the matrix of \(\tilde g\) given by \eqref{gtilda}, we obtain \eqref{alambda}.
\end{proof}

%\begin{Proposition}
%Let \(h\) be defined by \eqref{hhh} and $\tilde g$ by \eqref{gtilda}. Then
%\begin{equation}\label{aaaa}
%A
%=h^{-1}\tilde g=
%\frac{x^2+y^2}{2xy}
%\begin{pmatrix}
%x & -x^2/y\\
%-y^2/x & y
%\end{pmatrix}
%\end{equation}
%with respect to the coordinate frame
%\((\partial_x,\partial_y)\).
%\end{Proposition}

%\begin{proof}
%From \eqref{hhh}, we have
%\[
%h^{-1}
%=
%\begin{pmatrix}
%x^3 & 0\\
%0 & y^3
%\end{pmatrix}.
%\]
%Multiplying \(h^{-1}\) with the matrix of \(\tilde g\) given by \eqref{gtilda}, we obtain
%\eqref{aaaa}.
%\end{proof}

{\color{black}By Lemma~\ref{lem31} and Corollary~\ref{cor1}, applied with \(g=h_\lambda\), the~rank-one tensor \(\tilde g\) and the nondegenerate metric \(h_\lambda\) give \(A_\lambda=h_\lambda^{-1}\tilde g\) with
\[
A_\lambda^2=\mu_\lambda A_\lambda,
\qquad
\mu_\lambda=\tr(A_\lambda),
\]
and the normalized tensor \(P_\lambda:=\mu_\lambda^{-1}A_\lambda\) is a projector on the open subset of \(\Rp^2\) where \(ad-b^2\neq0\) and \(\mu_\lambda\neq0\).}

%\begin{Theorem}
%The tensor \(A\) given by %\eqref{aaaa} satisfies
%\(
%A^2=\mu A,
%\)
%where
%\[
%\mu
%=
%\frac{(x+y)(x^2+y^2)}{2xy}.
%\]
%Hence
%\[
%P:=\frac1\mu A
%\]
%is a projector on \(T\Rp^2\).
%\end{Theorem}

%\begin{proof}
%A direct computation gives
%\[
%A^2
%=
%frac{(x+y)(x^2+y^2)}{2xy}\,A.
%\]
%Since \(x,y>0\), we have \%(\mu>0\). Therefore
%\(
%P=\frac1\mu A
%\)
%is well defined and satisfies
%\(
%P^2=P.
%\)
%\end{proof}

\begin{Corollary}
The corresponding projector is
\[
P_\lambda(X)
=
\frac{\omega(X)}{\omega(V_\lambda)}\,V_\lambda,
\qquad
V_\lambda=h_\lambda^{-1}\omega,
\]
where
\[
\omega=\frac{dx}{x}-\frac{dy}{y}.
\]
Moreover,
\[
\im(P_\lambda)=\mathrm{span}\{V_\lambda\},
\qquad
\ker(P_\lambda)=\ker\omega .
\]
In particular, \(\ker(P_\lambda)\) is generated by
\(
x\partial_x+y\partial_y .
\)
\end{Corollary}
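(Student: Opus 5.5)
The proof is a direct specialization of Corollary~\ref{cor1}, or equivalently of formula \eqref{projector}, to $n=2$, $\alpha=(1,-1)$. I would first record that $\tilde g$ in \eqref{gtilda} can be written as
\[
\tilde g=\frac{x^2+y^2}{2xy}\,\omega\otimes\omega,\qquad \omega=\frac{dx}{x}-\frac{dy}{y}.
\]
This is just the factorization already seen in the $(x,y)$-coordinate computation, since the matrix in \eqref{gtilda} is the outer product of the covector $(x^{-1},-y^{-1})$ with itself.

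Next, on the set where $ad-b^2\neq0$, define $V_\lambda=h_\lambda^{-1}\omega$ by $h_\lambda(V_\lambda,X)=\omega(X)$. Then
\[
A_\lambda X=\frac{x^2+y^2}{2xy}\,\omega(X)\,V_\lambda,
\]
as in \eqref{aaaan}. This can be cross-checked against \eqref{alambda}: the columns of that matrix are multiples of $h_\lambda^{-1}(x^{-1},-y^{-1})^T$. Consequently
\[
\mu_\lambda=\tr A_\lambda=\frac{x^2+y^2}{2xy}\,\omega(V_\lambda).
\]
Since $\frac{x^2+y^2}{2xy}>0$ on $\Rp^2$, the condition $\mu_\lambda\neq0$ is equivalent to $\omega(V_\lambda)\neq0$. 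Dividing by $\mu_\lambda$ gives
\[
P_\lambda X=\frac{\omega(X)}{\omega(V_\lambda)}\,V_\lambda .
\]

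The image and kernel follow immediately from this formula. The image is $\mathrm{span}\{V_\lambda\}$, because $P_\lambda V_\lambda=V_\lambda$ and every value of $P_\lambda$ is a multiple of $V_\lambda$. The kernel is $\ker\omega$, because the coefficient $\omega(X)/\omega(V_\lambda)$ vanishes exactly when $\omega(X)=0$.

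For the final claim, evaluate
\[
\omega(x\partial_x+y\partial_y)=\frac{x}{x}-\frac{y}{y}=0 .
\]
So $x\partial_x+y\partial_y$ lies in $\ker\omega$. It is nonzero on $\Rp^2$, and $\ker\omega$ is one-dimensional because $\omega\neq0$ and $n=2$. Hence $x\partial_x+y\partial_y$ generates $\ker(P_\lambda)$.

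No step is a real obstacle. The only point needing care is matching the explicit matrix \eqref{alambda} with the abstract form $\cosh(\cdot)\,V_\lambda\otimes\omega$, that is, checking that the column structure factors through $\omega=(x^{-1},-y^{-1})$. Writing $h_\lambda^{-1}\tilde g=\frac{x^2+y^2}{2xy}\,(h_\lambda^{-1}\omega)\,\omega^T$ does this in one line.
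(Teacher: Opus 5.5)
Your proposal is correct and follows essentially the same route as the paper: you factor $\tilde g=\frac{x^2+y^2}{2xy}\,\omega\otimes\omega$, read off $A_\lambda X=\frac{x^2+y^2}{2xy}\,\omega(X)V_\lambda$, normalize, and then identify the image, the kernel, and the generator $x\partial_x+y\partial_y$ of the one-dimensional $\ker\omega$. Your extra remarks are correct details that the paper leaves implicit: that $\mu_\lambda\neq0$ is equivalent to $\omega(V_\lambda)\neq0$ because $\frac{x^2+y^2}{2xy}>0$, and the column check against \eqref{alambda}.
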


\begin{proof}
Since
\[
\tilde g
=
\frac{x^2+y^2}{2xy}\,\omega\otimes\omega
\]
and \(V_\lambda=h_\lambda^{-1}\omega\), the~tensor \(A_\lambda\) has the form
\[
A_\lambda X
=
\frac{x^2+y^2}{2xy}\,\omega(X)V_\lambda .
\]
After normalization, we have
\[
P_\lambda(X)
=
\frac{\omega(X)}{\omega(V_\lambda)}V_\lambda .
\]
Hence,
\(
\im(P_\lambda)=\mathrm{span}\{V_\lambda\}.
\) Also,
\[
P_\lambda X=0
\quad\Longleftrightarrow\quad
\omega(X)=0,
\]
so
\(
\ker(P_\lambda)=\ker\omega .
\)
Since
\[
\omega(x\partial_x+y\partial_y)
=
\frac{dx}{x}(x\partial_x+y\partial_y)
-
\frac{dy}{y}(x\partial_x+y\partial_y)
=
1-1=0,
\]
and \(\ker\omega\) is one-dimensional, we get
\[
\ker(P_\lambda)=\mathrm{span}\{x\partial_x+y\partial_y\}.
\]
%Since \(P\) is a rank-one projector on a two-dimensional manifold, both
%\(\operatorname{im}(P)\) and \(\ker(P)\) are one-dimensional. Hence the above non-zero vector fields determine these distributions.
\end{proof}

%\begin{Corollary}
%The canonical projector is
%\[
%P
%=
%\frac1{x+y}
%\begin{pmatrix}
%x & -x^2/y\\
%-y^2/x & y
%\end{pmatrix}.
%\]
%Moreover,
%\(
%\im(P)
%=
%\mathrm{span}\{x^2\partial_x-y^2\partial_y\},
%\)
%and
%\(
%\ker(P)
%=
%\mathrm{span}\%%{x\partial_x+y\partial_y\}.
%\)
%\end{Corollary}

%\begin{proof}
%The formula for \(P\) follows directly from \eqref{aaaa}. Furthermore,
%\[
%P(x^2\partial_x-y^2\partial_y)
%=
%x^2\partial_x-y^2\partial_y,
%\]
%hence
%\[
%x^2\partial_x-y^2\partial_y\in\im(P).
%\]
%Also,
%\(
%P(x\partial_x+y\partial_y)=0,
%\)
%so
%\[
%x\partial_x+y\partial_y\in\ker%(P).
%\]

%Since both distributions are one-dimensional, this determines
%\(\im(P)\) and \(\ker(P)\).
%\end{proof}

 \section{Conclusions}

In this paper, we studied a family of projector-induced polynomial structures associated with the reciprocal cost geometry.  Starting from the rank-one tensor \(\tilde g\) determined by the reciprocal cost function in logarithmic coordinates and a family of Hessian metrics \(h_\lambda\), we constructed  \((1,1)\)-tensor field \(A_\lambda\) and the projector \(P_\lambda\). The~projector \(P_\lambda\) determines \mbox{a splitting}
\[
TU=\im(P_\lambda)\oplus\ker(P_\lambda),
\]
which induces the almost product structure
\(
F_\lambda=2P_\lambda-I,
\)
and the corresponding golden and metallic structures. The~Hessian structure  is determined by the metric
\(
h_\lambda=\nabla_x^2\Phi_\lambda
\)
with respect to  the \(x\)-coordinates, while the rank-one tensor \(\tilde g\)  is obtained from the logarithmic reciprocal cost geometry. {\color{black}The rank-one degeneracy is therefore not an obstruction but~the starting point of the construction.}

We studied several properties of the projector \(P_\lambda\) and the induced structures, including eigendistributions,  parallelism, integrability, and~curvature. In~particular, we showed that \(P_\lambda\) is generally not parallel with respect to either the canonical flat affine connection in logarithmic coordinates or the Levi-Civita connection of the Hessian metric \(h_\lambda\).  The~eigendistributions of \(P_\lambda\) are integrable and are determined by the vector field \(V_\lambda\) and the one-form \(\omega=d(\log R)\). The~polynomial structures are induced by the projector associated with the rank-one tensor \(\tilde g\) and the Hessian metric \(h_\lambda\).  
The construction is developed in arbitrary dimension, while the two-dimensional case is considered in~detail. 

The obtained structures depend on the choice of the Hessian metric \(h_\lambda\). In~particular, the~deformation parameter \(\lambda\) changes the corresponding projector, the~induced polynomial structures, and~their geometric~properties.

The construction in the paper shows that a rank-one tensor obtained from reciprocal cost geometry, together with a nondegenerate Hessian metric, determines a projector and the associated almost product, golden, and~metallic structures. In~this way, reciprocal cost geometry  leads to polynomial structures on Hessian manifolds. {\color{black} It is useful to summarize the geometric meaning of the construction.
The projector \(P_\lambda\) determines the distinguished cost direction
\(V_\lambda\), generated by the gradient of \(\log R\), and~splits the tangent
space into the one-dimensional distribution
\(\operatorname{im}(P_\lambda)=\operatorname{span}\{V_\lambda\}\) and its
\(h_\lambda\)-orthogonal complement
\(\ker(P_\lambda)=\ker\omega\). Since both eigendistributions are integrable,
they define two complementary foliations of the Hessian manifold, one along the
cost direction and one transverse to it. The~induced almost product, golden,
and \((p,q)\)-metallic structures preserve this decomposition and act with
different eigenvalues on the two foliations. Moreover, this decomposition is,
in general, not preserved by parallel transport with respect to either the
canonical flat connection or the Levi-Civita connection of \(h_\lambda\).
Thus, the~reciprocal cost function, the~associated Hessian metric, and~the
induced polynomial structures form a natural geometric construction on the Hessian
manifold. }

Further directions will include the study of the positive definite locus and curvature properties of \(h_\lambda\) and possible relations with Hessian~\cite{shima2007} and information geometry~\cite{amari}. {\color{black} It would  be of interest to study analogous constructions for other rank-one Hessian tensors and the corresponding induced polynomial structures.}

\vspace{6pt}
%%%%%%%%%%%%%%%%%%%%%%%%%%%%%%%%%%%%%%%%%%
\noindent {\bf Author Contributions:} {Conceptualization, J.W.; Methodology, J.W. and M.Z.; Software, J.W.; Validation, J.W. and M.Z.; Formal Analysis, M.Z. and J.W.; Investigation, J.W. and M.Z.; Resources,
J.W.; Writing---Original Draft Preparation, J.W.; Writing---Review and Editing, M.Z. and J.W.; Funding Acquisition, J.W. All authors have read and agreed to the published version of the~manuscript.}

%%%%%%%%%%%%%%%%%%%%%%%%%%%%%%%%%%%%%%%%%%

\vspace{6pt}
%%%%%%%%%%%%%%%%%%%%%%%%%%%%%%%%%%%%%%%%%%
\noindent
{\color{black}{\bf Acknowledgments:}{ {The} %MDPI: Please ensure that all individuals included in this section have consented to the acknowledgement. %MZ Yes, it is okay
 authors thank Elshad Allahyarov, Anil Thapa, Philip Beltracchi, and Megan
Simons for their valuable comments on an earlier version of the paper. }} %MDPI: Please ensure that all individuals included in this section have consented to the acknowledgement.%MZ it is okay
\end{document}